\title{Kneser's theorem for codes and $\ell$-divisible set families}
\author{Chenying Lin\thanks{Fakult\"at f\"ur Mathematik, Universit\"at Regensburg. \texttt{Chenying.Lin@mathematik.uni-regensburg.de}}, Gilles Z{\'e}mor\thanks{Institut de Math\'ematiques de Bordeaux, UMR 5251. \texttt{zemor@math.u-bordeaux.fr}}\;\thanks{Institut universitaire de France}}
\date{\today}
\theoremstyle{plain}
\newtheorem{thm}{Theorem}[section]
\newtheorem{prop}[thm]{Proposition}
\newtheorem{lem}[thm]{Lemma}
\theoremstyle{definition}
\newtheorem{defn}[thm]{Definition}
\newtheorem{rmk}[thm]{Remark}
\newcommand{\Z}{\mathbb{Z}} % integers
\newcommand{\N}{\mathbb{N}} % natural numbers
\newcommand{\F}{\mathbb F}
\newcommand{\cF}{\mathcal F}
\newcommand{\one}{\mathbf 1}
\def\spn#1{\langle #1\rangle}
\def\inner#1#2{\langle #1 , #2\rangle}
\DeclareMathOperator{\supp}{Supp}
\DeclareMathOperator{\St}{St}
\newcommand{\blambda}{\bm\lambda}
\begin{document}

\maketitle

\begin{abstract}
    A $k$-wise $\ell$-divisible set family is a collection $\cF$ of subsets of
${ \{1,\ldots,n \} }$ such that any intersection of $k$ sets in $\cF$ has
cardinality divisible by $\ell$. If $k=\ell=2$, it is well-known that $|\cF|\leq
2^{\lfloor n/2 \rfloor}$. We generalise this by proving that $|\cF|\leq
2^{\lfloor n/p\rfloor}$ if $k=\ell=p$, for any prime number $p$.
    
    For arbitrary values of $\ell$, we prove that $4\ell^2$-wise
$\ell$-divisible set families $\cF$ satisfy $|\cF|\leq 2^{\lfloor
n/\ell\rfloor}$ and that the only families achieving the upper bound are atomic,
meaning that they consist of all the unions of disjoint subsets of size $\ell$. 
This improves upon a recent result by Gishboliner, Sudakov and Timon,
that arrived at the same conclusion for $k$-wise $\ell$-divisible families, with
values of $k$ that behave exponentially in $\ell$.

    Our techniques rely heavily upon a coding-theory analogue of Kneser's Theorem from
additive combinatorics.
\end{abstract}

\section{Introduction}
\subsection{Results and context}
We are interested in the maximal size of a family $\cF\subset 2^{[n]}$ of subsets of
$[n]:=\{1,2,\ldots,n\}$, such that the intersection of any $k$ subsets of $\cF$
satisfies some divisibility properties, where $k$ is some integer. 
More specifically, let us say that $\mathcal{F}\subset 2^{[n]}$ is~ {\em
$k$-wise $\ell$-divisible} if $|A_1\cap\cdots\cap A_k|$ is divisible by $\ell$ for any $A_1,\ldots,A_k\in\mathcal{F}$.
Our main result is the following:
\begin{thm}\label{MainThmGeneralizedPEventown}
Let $p$ be a prime integer and let $\cF\subset 2^{[n]}$ 
be a $p$-wise $p$-divisible set family.
Then, $|\mathcal{F}|\leq 2^{\lfloor n/p\rfloor}$.
\end{thm}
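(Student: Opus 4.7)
The plan is to embed the problem in $\F_p^n$, introduce the $\F_p$-linear span $V:=\mathrm{span}_{\F_p}\{\one_A:A\in\cF\}$, and combine an induction on $n$ with the paper's coding-theoretic Kneser theorem.

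\textbf{Setup.} The special cases $A_1=\cdots=A_p=A$ and $A_1=\cdots=A_{p-1}=A$, $A_p=B$ of the hypothesis already yield $|A|\equiv 0\pmod p$ for every $A\in\cF$ and $|A\cap B|\equiv 0\pmod p$ for all $A,B\in\cF$. By $\F_p$-bilinearity these imply that $V$ is self-orthogonal for the standard inner product, and in particular every $0/1$ vector $\one_S\in V$ has $|S|=\langle\one_S,\one_S\rangle$ divisible by $p$.

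\textbf{Inductive reduction.} I would induct on $n$; the base case $n<p$ is immediate. For the inductive step the crucial sub-goal is to locate a ``$p$-atom'', namely a set $B\subseteq[n]$ with $|B|=p$ such that $A\cap B\in\{\emptyset,B\}$ for every $A\in\cF$. Given such a $B$, the family splits as $\cF=\cF_0\sqcup\cF_B$ (sets disjoint from, resp.\ containing, $B$), and both $\cF_0$ and $\cF_B':=\{A\setminus B:A\in\cF_B\}$ are $p$-wise $p$-divisible families on $[n]\setminus B$; for $\cF_B'$ this uses $|(A_1\cap\cdots\cap A_p)\setminus B|=|A_1\cap\cdots\cap A_p|-|B|\equiv 0\pmod p$ since $B$ lies in every $A_i$. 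The inductive hypothesis then gives $|\cF|=|\cF_0|+|\cF_B'|\le 2\cdot 2^{\lfloor(n-p)/p\rfloor}=2^{\lfloor n/p\rfloor}$.

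\textbf{Finding the atom (where Kneser enters).} Define the equivalence $i\sim j\iff\{A\in\cF:i\in A\}=\{A\in\cF:j\in A\}$; every $A\in\cF$ is a union of $\sim$-classes. The easy situation is when some class has size $\ge p$: any $p$-subset of that class is automatically a $p$-atom, since membership in each $A\in\cF$ is uniform across a class. The delicate case, which I expect to drive the whole argument, is when every class has size strictly less than $p$. This is exactly where I would invoke the coding-theoretic Kneser theorem on the set $\cF$ inside $V\subseteq(\F_p^n,+)$: the anticipated conclusion is either (a) $V$ contains some $\one_B$ with $|B|=p$, giving a $p$-atom directly (self-orthogonality forces $|A\cap B|$ divisible by $p$ and $|A\cap B|\le p$, hence $|A\cap B|\in\{0,p\}$), or (b) the stabilizer produced by Kneser furnishes a decomposition of $[n]$ into disjoint blocks of size $\ge p$ refining every $A\in\cF$, forcing $|\cF|\le 2^{(\text{number of blocks})}\le 2^{\lfloor n/p\rfloor}$.

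\textbf{Main obstacle.} The genuinely hard part is this small-class regime. The bare self-orthogonality bound gives only $|V|\le p^{n/2}$, which is much too weak since $\cF$ consists of $0/1$-vectors, an exponentially sparse subset of $V$; the point of the theorem is precisely that such $0/1$ vectors obey the sharper $2^{\lfloor n/p\rfloor}$ bound coming from an atomic block decomposition. Converting the output of the coding Kneser theorem into either a concrete $p$-atom or a clean atomic partition---rather than merely an abstract stabilizer---is the step I expect to demand the most care, and it is the part of the plan I cannot make fully explicit without the paper's precise Kneser statement.
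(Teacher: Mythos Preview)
Your setup and the identification of Kneser's theorem for codes as the key tool are correct, but the inductive architecture you propose diverges from the paper's, and your ``hard case'' is a genuine gap that you have not closed.

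The paper's dichotomy is on the stabiliser of $V^{\spn{p}}$, not on atom sizes. If $\St(V^{\spn{p}})$ is non-trivial, one splits $V^{\spn{p}}=C_1\oplus C_2$ along disjoint supports $S_1,S_2$; the restrictions $\cF_i:=\cF|_{S_i}$ remain $p$-wise $p$-divisible, $|\cF|\le|\cF_1|\cdot|\cF_2|$, and one inducts on each piece via $\lfloor n_1/p\rfloor+\lfloor n_2/p\rfloor\le\lfloor n/p\rfloor$. If $\St(V^{\spn{p}})$ is trivial, there is no search for an atom at all: one argues by contradiction via dimension counting. Kneser gives $\dim V^{\spn{p}}\ge p\dim V-p+1$, and if $\dim V\ge\lfloor n/p\rfloor+2$ this exceeds $n$.

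The crux you are missing is how to force $\dim V\ge\lfloor n/p\rfloor+2$ from $|\cF|>2^{\lfloor n/p\rfloor}$. The plain bound $|\cF|\le 2^{\dim V}$ yields only $\dim V\ge\lfloor n/p\rfloor+1$, whence $\dim V^{\spn{p}}\ge p\lfloor n/p\rfloor+1$, which can be as low as $n-p+2$ and gives no contradiction. The paper's decisive input is \Cref{lem: improved Odlyzo}: when $\St(V^{\spn{3}})$ is trivial (implied by $\St(V^{\spn{p}})$ trivial for $p\ge3$), one has $|V\cap\{0,1\}^n|\le 2^{\dim V-1}$. This single saved factor of $2$ closes the gap. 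Your final paragraph correctly senses that the $0/1$ structure must be exploited beyond self-orthogonality; that lemma is exactly this exploitation, and its proof is a fairly delicate case analysis on a $4\times 3$ submatrix of a systematic generator matrix of $V$. Neither of your hoped-for outcomes (a) a vector $\one_B\in V$ with $|B|=p$, nor (b) an atomic block partition, is what Kneser actually produces in the trivial-stabiliser regime.
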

We note that the upper bound $2^{\lfloor n/p\rfloor}$ is the best possible,
since it can be achieved by {\em atomic} families. Let us say that a set family $\mathcal{F}$ is
atomic if it consists of all the unions of some pairwise disjoint subsets of
$[n]$ called the \textit{atoms} of $\cF$. By choosing atoms of cardinality $p$, we see
that the corresponding atomic family $\cF$ is $k$-wise $p$-divisible for any $k$, and 
achieves the upper bound of \Cref{MainThmGeneralizedPEventown}.

For $p=2$, \Cref{MainThmGeneralizedPEventown} recovers a folklore
result sometimes called the {\em Eventown} Theorem~\cite{babai}.
For $p=3$, \Cref{MainThmGeneralizedPEventown} is best possible in the
following sense: as pointed out by Frankl and Odlyzko \cite{FranklOdlyzko}, from
a Hadamard matrix of order $12$ one can derive a family $\cF$ of subsets of $[12]$ of size
$|\cF|=24>2^{12/3}$ such that the intersection of any two subsets of $\cF$ has
cardinality divisible by $3$: and more generally, for $n=12m$, one obtains such
families of size $24^m$.
Therefore, the hypothesis in \Cref{MainThmGeneralizedPEventown} that
$\cF$ be $3$-wise $3$-divisible cannot be weakened to being only $2$-wise
$3$-divisible.

For $p=2$, maximal $2$-wise $2$-divisible families need not be atomic with atoms of size $2$: indeed,
any binary self-dual code, whose structure can be very intricate, consists of a set
of characteristic vectors of a maximal $2$-wise $2$-divisible family. However,
it was shown in \cite{kwise4} that maximal $3$-wise $2$-divisible families are
atomic. The following companion result to \Cref{MainThmGeneralizedPEventown}
provides a generalisation.

\begin{thm}\label{MainThmPEventownExtremal}
    Let $p$ be a prime integer and let $\mathcal{F}\subset 2^{[n]}$ be a
$(p+1)$-wise $p$-divisible set family. If $|\mathcal{F}|>2^{\lfloor n/p\rfloor
-1}$, then $\mathcal{F}$ is contained in an atomic family with atoms of size $p$.
\end{thm}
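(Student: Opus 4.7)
The plan is to argue by induction on $n$, at each step exhibiting a ``block'' $S \subseteq [n]$ of cardinality a positive multiple of $p$ on which every set of $\mathcal{F}$ is either entirely present or entirely absent (i.e., $|A \cap S| \in \{0,|S|\}$ for all $A \in \mathcal{F}$). Given such a block, one projects $\mathcal{F}$ onto $[n] \setminus S$ by sending $A$ to $A \setminus S$, obtaining a family $\mathcal{F}'$ on $n - |S|$ coordinates, applies the inductive hypothesis to $\mathcal{F}'$, and reconstructs an atomic cover of $\mathcal{F}$ by adjoining $|S|/p$ atoms of size $p$ subdividing $S$. The base $n < 2p$ is immediate from Theorem~\ref{MainThmGeneralizedPEventown}.

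Two properties of $\mathcal{F}'$ must be checked: $(p+1)$-wise $p$-divisibility is inherited because $S$ is either contained in or disjoint from any intersection $A_1 \cap \cdots \cap A_{p+1}$, so $|A_1' \cap \cdots \cap A_{p+1}'|$ differs from $|A_1 \cap \cdots \cap A_{p+1}|$ by $0$ or $|S|$, both multiples of $p$; and the size lower bound $|\mathcal{F}'| \geq |\mathcal{F}|/2 > 2^{\lfloor n/p \rfloor - 2} \geq 2^{\lfloor (n-|S|)/p \rfloor - 1}$ holds because at most two members of $\mathcal{F}$ share a projection (one containing $S$, one disjoint from $S$).

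The core of the proof is the production of the block $S$. Consider the $\F_2$-span $V := \spn{\mathcal{F}}_{\F_2} \subseteq \F_2^n$; the hypothesis $|\mathcal{F}| > 2^{\lfloor n/p \rfloor - 1}$ forces $\dim_{\F_2} V \geq \lfloor n/p \rfloor$, and combined with $|\mathcal{F}| \leq 2^{\lfloor n/p \rfloor}$ from Theorem~\ref{MainThmGeneralizedPEventown} this places $\mathcal{F}$ in the near-extremal regime of the coding-theoretic Kneser theorem that underlies Theorem~\ref{MainThmGeneralizedPEventown}. In this regime I expect Kneser's theorem to yield a non-trivial \emph{stabiliser} of $V$ — a set $S$ whose characteristic vector lies in $V$ and on which every $v \in V$ is constant — with $|S|$ forced to be a positive multiple of $p$ by the $p$-divisibility of the sizes $|A|$ together with $|A \cap S| \in \{0, |S|\}$.

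The main obstacle is the extraction of $S$. A priori $V$ need not itself satisfy the $(p+1)$-wise $p$-divisibility (the naive symmetric-difference expansion that works for $p=2$ breaks down for $p \geq 3$ due to uncontrolled higher-order intersection terms), so one cannot simply reapply Theorem~\ref{MainThmGeneralizedPEventown} to $V$; instead, the stability form of the coding-theoretic Kneser theorem must be translated directly into the existence of $S$. Moreover, the strictly stronger $(p+1)$-wise hypothesis is essential here: Frankl and Odlyzko's $p=3$ Hadamard construction shows that under only the $p$-wise hypothesis the extremal family can avoid any atomic stabiliser, so the extra divisibility afforded by the $(p+1)$-wise assumption must be what pins down a genuine period in $V$ rather than merely an approximate one, and orchestrating this quantitatively is the central technical step.
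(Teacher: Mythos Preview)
Your inductive shell is reasonable, but the core step is not carried out, and the choice of ground field undermines the approach. Working over $\F_2$ discards the hypothesis: $(p+1)$-wise $p$-divisibility translates (via \Cref{codes k closed}) into $V^{\spn{p+1}}\subset\one^\perp$ \emph{over $\F_p$}, and says nothing over $\F_2$ when $p$ is odd. Without an upper bound on $\dim V^{\spn{k}}$ there is no tension for Kneser's theorem to exploit, so no non-trivial stabiliser can be forced from the $\F_2$-span. Your intuition that the $(p+1)$-wise hypothesis must ``pin down a genuine period'' is correct, but the mechanism is invisible over $\F_2$; the paper works throughout over $\F_p$.

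The paper also does not hunt for a single atom-like block $S$ on which $\cF$ is constant. Instead it shows that $\St(V^{\spn{p+1}})$ is non-trivial: assuming otherwise, iterating Kneser (\Cref{lem:kneserk}) gives $\dim V^{\spn{p+1}}\geq (p+1)\dim V-p$, while the sharpened bound $|\cF|\leq 2^{\dim V-1}$ of \Cref{lem: improved Odlyzo} (which itself requires the trivial-stabiliser hypothesis on $V^{\spn{3}}$) combined with $|\cF|>2^{\lfloor n/p\rfloor-1}$ forces $\dim V^{\spn{p+1}}>n$, a contradiction. Once the stabiliser is non-trivial, \Cref{StabiliserDecomposition} yields $V^{\spn{p+1}}=C_1\oplus C_2$ with disjoint supports $S_1,S_2$, and the induction runs on \emph{both} restrictions $\cF|_{S_1},\cF|_{S_2}$ via \Cref{lem:C1+C2}, each inheriting $(p+1)$-wise $p$-divisibility from $C_i\subset\one^\perp$ rather than from any constancy of $\cF$ on $S_i$. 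So even the induction structure differs from your single-block removal; the ``block'' you seek need not exist at the level of $V$ itself.
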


\Cref{MainThmGeneralizedPEventown} and \Cref{MainThmPEventownExtremal}
are inspired by the recent work of Gishboliner, Sudakov and Tomon
\cite{Gishboliner2022Small}. For every positive integer $\ell$, they showed that
there exists an integer $k$ that is a function of $\ell$ with the following property. For
every $k$-wise $\ell$-divisible set family $\mathcal{F}\subset 2^{[n]}$, the cardinality $|\mathcal{F}|$ 
is not greater than $2^{\lfloor n/\ell\rfloor}$. Moreover, if $|\mathcal{F}|$ is large enough, 
then $\mathcal{F}$ is a subfamily of an atomic family with atoms of cardinality $\ell$. The guaranteed upper bound on $k$ given in
\cite{Gishboliner2022Small} is exponential in $\ell$. Summarising:

\begin{thm}[Theorem 9, \cite{Gishboliner2022Small}]\label{SudakovStructureThm}
    Given $\ell>0$, there exists a positive integer
$k=2^{O(\ell\log\ell)}$ with the following property. Let 
$\mathcal{F}\subset 2^{[n]}$ be $k$-wise $\ell$-divisible. Then
    \begin{itemize}
        \item $|\mathcal{F}|\leq 2^{\lfloor n/\ell\rfloor}$;
        \item if $|\mathcal{F}|>2^{\lfloor n/\ell\rfloor -1}$, then
$\mathcal{F}$ is contained in an atomic family with atoms of size $\ell$.
    \end{itemize}
\end{thm}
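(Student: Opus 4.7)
The plan is to obtain both the size bound and the structural classification via an induction that reveals an atomic structure on $[n]$ one atom at a time, using $k$ as a ``resolving power'' which permits enough distinct divisibility tests to force the structure.

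For the size bound, I would consider the $\mathbb{F}_2$-span $V$ of the characteristic vectors $\mathbf{1}_A$, $A\in\mathcal{F}$, so that $|\mathcal{F}|\le 2^{\dim V}$. Via standard inclusion--exclusion identities, the $k$-wise $\ell$-divisibility conditions translate into constraints on products of these vectors computed in $\mathbb{Z}/\ell\mathbb{Z}$. I would first handle the case when $\ell=p$ is prime, where the conditions amount to an isotropy-like statement yielding $\dim V\le\lfloor n/p\rfloor$; then lift to prime-power moduli by $p$-adic expansion of the divisibility relations; and finally assemble the general case from the Chinese Remainder Theorem applied to the prime factorisation of $\ell$.

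For the structural part, the plan is a twin-class argument. Call $i,j\in[n]$ \emph{twins} if they belong to exactly the same members of $\mathcal{F}$, and let $\Pi$ be the resulting partition of $[n]$. The family $\mathcal{F}$ factors through $\Pi$, and its divisibility profile depends only on the block sizes. If $|\mathcal{F}|>2^{\lfloor n/\ell\rfloor-1}$, the size bound forces $\Pi$ to have many blocks, and combined with $\ell$-divisibility their sizes are constrained to be multiples of $\ell$. With $k$ large enough, one can probe any candidate block-size pattern by a $k$-tuple of sets from $\mathcal{F}$ whose intersection would violate $\ell$-divisibility unless every block has size exactly $\ell$. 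Iterating by quotienting out one atom at a time reduces $n$ by $\ell$ without increasing the required $k$.

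The main obstacle is quantifying the resolving power $k$ sharply enough: for each ``bad'' block-size pattern (a partition of some integer at most $\ell$ that is not concentrated at $\ell$), a dedicated $k$-tuple of sets in $\mathcal{F}$ must be constructed to witness a divisibility failure, and the test must survive the quotient steps of the induction. The bookkeeping needed to maintain the $k$-wise hypothesis across all iterations, together with enumerating the possible block-size patterns, is what accounts for the stated $k=2^{O(\ell\log\ell)}$ dependence, and it is the part of the argument that I expect to be the most delicate.
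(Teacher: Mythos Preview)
This statement is not proved in the paper: it is Theorem~9 of \cite{Gishboliner2022Small}, quoted here only as background for the paper's own improvements (Theorems~\ref{MainThmGeneralizedPEventown}, \ref{MainThmPEventownExtremal}, \ref{MainThmGeneral}). There is therefore no in-paper proof to compare against.

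That said, a few remarks on your sketch. The structural half---partitioning $[n]$ into twin classes (what the paper calls atoms), showing one such class has size divisible by $\ell$, and then inducting by removing it---is indeed the skeleton both of \cite{Gishboliner2022Small} and of the paper's proof of Theorem~\ref{MainThmGeneral}. So that part is on the right track.

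The size-bound half has a real gap. You propose to take $V$ to be the $\mathbb{F}_2$-span of the characteristic vectors and deduce $\dim_{\mathbb{F}_2} V\le\lfloor n/p\rfloor$ from an ``isotropy-like statement'' when $\ell=p$ is prime. But $p$-divisibility of intersections is a condition modulo $p$, not modulo $2$; it gives no direct orthogonality relation over $\mathbb{F}_2$, and there is no reason the $\mathbb{F}_2$-dimension should be bounded by $\lfloor n/p\rfloor$. (For $p=2$ this is Eventown and your idea works; for $p\ge 3$ it does not.) Both \cite{Gishboliner2022Small} and the present paper instead work over $\mathbb{F}_p$ for each prime $p\mid\ell$, use \eqref{eq:Odlyzko} to bound $|\mathcal{F}|$ by $2^{\dim_{\mathbb{F}_p} V}$, and then---this is the hard part---control $\dim_{\mathbb{F}_p} V$ by analysing the growth of the powers $V^{\langle i\rangle}$. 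The present paper does this via Kneser's Theorem for codes, which is the mechanism entirely absent from your outline and which is what brings $k$ down from exponential to polynomial in $\ell$.
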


\Cref{SudakovStructureThm}
essentially solved a conjecture of Frankl and Odlyzko \cite{FranklOdlyzko}.
\cref{MainThmGeneralizedPEventown} and \cref{MainThmPEventownExtremal}
go some way towards finding the optimal value of $k$ in
\Cref{SudakovStructureThm}. Though \Cref{MainThmGeneralizedPEventown} and \Cref{MainThmPEventownExtremal} only deal with prime
values of $\ell$, we also obtain a result for general $\ell$ that reduces the
value of $k$ from exponential in $\ell$ to polynomial in $\ell$. Specifically:

\begin{thm}[structure theorem for $\ell$-divisible set families]\label{MainThmGeneral}
    Let $\ell$ be a positive integer and let $\mathcal{F}\subset 2^{[n]}$ be a $4\ell^2$-wise $\ell$-divisible set family. Then
    \begin{itemize}
        \item $|\mathcal{F}|\leq 2^{\lfloor n/\ell\rfloor}$;
        \item if $|\mathcal{F}|>2^{\lfloor n/\ell\rfloor -1}$, then
$\mathcal{F}$ is contained in an atomic family with atoms of size $\ell$.
    \end{itemize}
\end{thm}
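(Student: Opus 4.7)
The plan is to translate the problem into a question about a binary linear code and then apply the Kneser theorem for codes developed earlier in this paper. Let $C = \spn{\one_A : A \in \cF}_{\F_2} \subseteq \F_2^n$ be the $\F_2$-linear span of the characteristic vectors of the members of $\cF$. Since distinct members of $\cF$ yield distinct characteristic vectors, one has $|\cF| \leq |C| = 2^{\dim C}$. The cardinality bound will therefore follow from an inequality $\dim C \leq \lfloor n/\ell\rfloor$, together with a description of $C$ in the borderline regime.

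The first step is to promote the $k$-wise $\ell$-divisibility hypothesis from $\cF$ itself to the larger code $C$. The main tool is the inclusion--exclusion identity
\[
w\bigl(\one_{A_1} + \cdots + \one_{A_r}\bigr) = \sum_{\emptyset \neq S \subseteq [r]} (-1)^{|S|+1}\, 2^{|S|-1} \, \Bigl|\bigcap_{i \in S} A_i\Bigr|,
\]
together with its analogue for the weight of a coordinatewise product of such sums, which expresses weights of products of elements of $C$ as integer combinations of cardinalities of intersections of members of $\cF$. If the inclusion--exclusion is only opened out to depth $\leq 4\ell^2$, the right-hand side is $\ell$-divisible term by term, provided one tracks the powers of $2$ arising as coefficients. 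A 2-adic bookkeeping argument, absorbing the $2$-adic valuation of $\ell$ into the quadratic constant $4\ell^2$, then yields the desired upgraded statement: the coordinatewise product code $C \star C$ consists entirely of vectors whose Hamming weight is divisible by $\ell$.

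The second step is to apply the Kneser theorem for codes to $C$ paired with itself. In the form relevant here, this theorem gives a lower bound of the shape
\[
\dim (C \star C) \geq 2 \dim C - \dim \St(C \star C),
\]
where $\St(C \star C)$ is the stabiliser of $C \star C$ under coordinatewise multiplication. Combined with an upper bound on $\dim(C \star C)$ coming from the fact that all its weights are divisible by $\ell$ (which confines $C \star C$ to a subspace of $\F_2^n$ whose dimension is governed by a partition-type structure of $[n]$ into blocks of size $\ell$), this forces $\dim C \leq \lfloor n/\ell\rfloor$. The stabiliser $\St(C \star C)$ has a natural combinatorial interpretation, being spanned by characteristic vectors of disjoint subsets of $[n]$, which will furnish the candidate atoms in the extremal analysis.

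For the extremal statement, the hypothesis $|\cF| > 2^{\lfloor n/\ell\rfloor - 1}$ forces $\dim C \geq \lfloor n/\ell\rfloor$, so both the Kneser lower bound and the $\ell$-divisibility upper bound on $\dim(C\star C)$ must be essentially tight. The equality case of Kneser then forces the stabiliser to be spanned by the characteristic vectors of $\lfloor n/\ell\rfloor$ pairwise disjoint subsets of $[n]$ of size $\ell$, which are the desired atoms, and a short argument confirms that $\cF$ is contained in the atomic family they generate. The main obstacle I expect is the 2-adic bookkeeping in the second paragraph: a naive use of inclusion--exclusion loses powers of $2$ exponential in $r$, and extracting the constant $4\ell^2$ likely requires a more subtle argument, possibly by first reducing modulo the odd part of $\ell$ via Theorems~\ref{MainThmGeneralizedPEventown} and \ref{MainThmPEventownExtremal} applied at each odd prime divisor, and then treating the $2$-part of $\ell$ separately.
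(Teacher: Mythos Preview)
Your proposal has a genuine gap at the heart of the argument: working exclusively over $\F_2$ cannot detect $\ell$-divisibility when $\ell$ has an odd prime factor, and the two bridging steps you sketch both fail.

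\textbf{First gap: lifting divisibility from $\cF$ to $C\star C$.} Your inclusion--exclusion identity for $w(\one_{A_1}+\cdots+\one_{A_r})$ is exact, not truncatable, and the sum runs over all nonempty $S\subset[r]$. A generic element of $C$ (or of $C\star C$) is an $\F_2$-sum of up to $\dim C$ characteristic vectors, so the identity involves intersections of up to $\dim C$ sets from $\cF$. But $\dim C$ can be of order $n/\ell$, far larger than $4\ell^2$, and for those deep terms you have no $\ell$-divisibility hypothesis. Your $2$-adic remark only helps with the $2$-part of $\ell$: the coefficient $(-2)^{|S|-1}$ is a unit modulo every odd prime, so for odd $\ell$ it contributes nothing, and you would need $\cF$ to be $r$-wise $\ell$-divisible for $r$ up to $\dim C$. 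That is essentially the conclusion of the theorem, not its hypothesis.

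\textbf{Second gap: from weight-divisibility to a dimension bound.} Even granting that every vector in $C\star C$ has weight divisible by $\ell$, this does \emph{not} confine $C\star C$ to a subspace of dimension $\lfloor n/\ell\rfloor$. For $\ell=2$ the even-weight code has dimension $n-1$; the Eventown bound $\dim C\le n/2$ comes from self-orthogonality over $\F_2$, not from weight parity. For odd primes $p$, a bound of the shape $\dim\le n/p$ for binary $p$-divisible codes is essentially equivalent to the Frankl--Odlyzko/Eventown-type statement you are trying to prove, so invoking it here is circular. Your Kneser inequality $\dim(C\star C)\ge 2\dim C-\dim\St(C\star C)$ is correct, but without an independent upper bound on $\dim(C\star C)$ it yields nothing.

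The paper's route is quite different and explains why the hypothesis is $4\ell^2$ rather than, say, $O(\ell)$. For each prime $p_i\mid\ell$ one works over $\F_{p_i}$, where \Cref{codes k closed} does capture $p_i$-divisibility as $V^{\spn{k}}\subset\one^\perp$. One studies the whole chain $V,V^{\spn{2}},\ldots,V^{\spn{t}}$ with $t=4\ell h$ (where $h$ is the number of prime divisors of $\ell$), using Kneser iteratively (\Cref{lem:1/2}) to force a large stabiliser, so that outside a small exceptional set $S_i$ every coordinate lies in an atom of $\cF$. A Fermat--Euler trick (\Cref{lem:prime power}) then upgrades $p_i$-divisibility of that atom's size to $p_i^{\alpha_i}$-divisibility, and this is what costs the extra factor $\phi(p_i^{\alpha_i})$ and drives $k$ up to order $\ell^2$. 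Finally one shows $\bigcup_i S_i\subsetneq[n]$, producing an atom whose size is divisible by every $p_i^{\alpha_i}$ and hence by $\ell$; induction on $n$ finishes. The key conceptual point you are missing is that one must work over $\F_{p}$ for each $p\mid\ell$ separately and reconcile the resulting decompositions through the combinatorics of atoms, rather than trying to see all of $\ell$ at once over $\F_2$.
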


\subsection{Weakening the $k$-wise $\ell$-divisibility hypothesis}
To give the full context of \Cref{SudakovStructureThm} and \Cref{MainThmGeneral},
we should stress that the conjecture of Frankl and Odlyzko and
the main result of \cite{Gishboliner2022Small} weaken the $k$-wise
$\ell$-divisibility hypothesis to only requiring that the intersections of $k$ {\em
distinct} subsets of $\cF$ has cardinality divisible by $\ell$. This issue of weakening
$\ell$-divisibility probably originates in a question of Erd\"os who asked
what becomes of the Eventown Theorem if we only require the intersection of
distinct sets to have even size. This last question was solved independently by Berlekamp \cite{Eventown2} and Graver \cite{Eventown1}. They showed that $|\mathcal{F}|\leq 2^{n/2}$ if $n\geq 6$ and $n$ is even; and~ $|\mathcal{F}|\leq 2^{(n-1)/2}+1$ if $n\geq 7$ and $n$ is odd.
The main result of \cite{Gishboliner2022Small} stated in full, reads:
\begin{thm}[Theorem 1, \cite{Gishboliner2022Small}]\label{SudakovMainResult}
    Given $\ell\in\N_{>0}$, there exists a positive integer $k=k(\ell)$ with the following property. Let $\mathcal{F}\subset 2^{[n]}$ be a set family such that $|A_1\cap\cdots\cap A_k|$ is divisible by $\ell$ for all pairwise-distinct $A_1,\ldots,A_k\in\mathcal{F}$. Then 
    \[
    |\mathcal{F}|\leq 2^{\lfloor n/\ell\rfloor}+c\quad\text{for a constant }c=c(\ell,k).
    \]
    If $n$ is large enough and $\ell$ divides $n$, then $c=0$.
\end{thm}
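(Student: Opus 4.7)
The plan is to derive \Cref{SudakovMainResult} from \Cref{MainThmGeneral} by showing that, from any $\cF\subset 2^{[n]}$ satisfying the weak hypothesis, one can extract a subfamily $\cF^\star\subset\cF$ with $|\cF\setminus\cF^\star|\leq c(\ell,k)$ that is $4\ell^2$-wise $\ell$-divisible in the strong (non-distinct) sense required by \Cref{MainThmGeneral}. Taking $k(\ell)$ large enough that such an extraction is possible, we then apply \Cref{MainThmGeneral} to $\cF^\star$ to obtain $|\cF^\star|\leq 2^{\lfloor n/\ell\rfloor}$, and adding back the at most $c(\ell,k)$ exceptional sets yields the first conclusion $|\cF|\leq 2^{\lfloor n/\ell\rfloor}+c(\ell,k)$.

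The heart of the argument is a reduction lemma bounding the number of ``irregular'' sets. Call $A\in\cF$ \emph{irregular} if there are some $j<k$ and pairwise-distinct $A_1,\ldots,A_j\in\cF\setminus\{A\}$ with $|A\cap A_1\cap\cdots\cap A_j|\not\equiv 0\pmod\ell$; these are precisely the obstructions preventing the weak hypothesis from being strong. I would bound the number of irregular sets by a constant $c_1(\ell,k)$ using a sunflower argument: a sufficiently large collection of irregular sets must contain, by the sunflower lemma, a sunflower with $k$ petals whose kernel equals every pairwise intersection of its petals. By the weak hypothesis applied to the $k$ petals, the kernel has size divisible by $\ell$, and hence so do all pairwise intersections of petals; propagating this via the $(k-j)$ companion sets witnessing irregularity then contradicts the irregularity of the petals. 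Iterating the sunflower reduction over all values of $j<k$ yields a constant bound $c_1(\ell,k)$ on the number of irregular sets.

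For the second conclusion (the case $\ell\mid n$, with $n$ large and $c=0$): once $|\cF|$ is large enough for $\cF^\star$ to exceed $2^{\lfloor n/\ell\rfloor-1}$, the structure theorem in \Cref{MainThmGeneral} gives an atomic family $\mathcal{A}$ with atoms of size $\ell$ containing $\cF^\star$. Because $\ell\mid n$ and $n$ is large, the atoms exhaust $[n]$ and $|\mathcal{A}|=2^{n/\ell}$ is the full atomic family. Suppose some $A\in\cF$ is not in $\mathcal{A}$; I claim this leads to a contradiction. For each atom $B$, pick $k-1$ distinct members $A_1,\ldots,A_{k-1}$ of $\mathcal{A}$ whose intersection is exactly $B$ (possible as soon as $n/\ell\geq k-1$, since $\mathcal{A}$ is the full atomic family). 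The weak hypothesis applied to $A,A_1,\ldots,A_{k-1}$ gives $|A\cap B|\equiv 0\pmod\ell$, and since $|B|=\ell$ this forces $A\cap B\in\{\emptyset,B\}$. Doing this for every atom $B$ shows $A$ is a union of atoms, hence $A\in\mathcal{A}$, a contradiction. Therefore $\cF\subset\mathcal{A}$ and $|\cF|\leq 2^{n/\ell}$.

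The main obstacle is the reduction lemma: bounding the number of irregular sets by a constant depending only on $\ell$ and $k$, independent of $n$ and $|\cF|$. The sunflower extraction must simultaneously handle all small intersection sizes $j<k$, and one must verify that the propagation of divisibility from the kernel actually yields a contradiction with irregularity in each case, including when the companion sets witnessing irregularity overlap the sunflower. Controlling the dependence of $c(\ell,k)$ on $\ell$ and $k$, through the quantitative sunflower-lemma constant, is a secondary but important challenge.
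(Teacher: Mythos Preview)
The paper does not prove \Cref{SudakovMainResult}: it is quoted verbatim from \cite{Gishboliner2022Small}, and the authors explicitly defer the passage from the strong (repeats-allowed) hypothesis to the weak (pairwise-distinct) hypothesis to \cite[Section~4]{Gishboliner2022Small}, choosing instead to ``focus on the statements of \Cref{MainThmGeneralizedPEventown}, \Cref{MainThmPEventownExtremal} and \Cref{MainThmGeneral} without dwelling on refinements that weaken the $k$-wise $\ell$-divisibility hypothesis.'' So there is no proof in this paper to compare your proposal to; your overall architecture (delete a bounded number of bad sets, apply the structure theorem \Cref{MainThmGeneral}, then upgrade to $c=0$ via the atomic description) is indeed the shape of what the paper attributes to \cite{Gishboliner2022Small}.

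That said, your reduction lemma has a genuine gap. The Erd\H{o}s--Rado sunflower lemma gives a sunflower only inside a family of sets of \emph{uniformly bounded cardinality}, and the bound on the family size needed to force a sunflower depends on that cardinality. Your irregular sets lie in $2^{[n]}$ with no size restriction, so ``a sufficiently large collection of irregular sets must contain a sunflower with $k$ petals'' does not follow from the sunflower lemma with a bound independent of $n$. Moreover, even granting a sunflower $P_1,\ldots,P_k$ among the irregular sets, your ``propagation'' step is unclear: knowing $|P_i\cap P_j|=|K|\equiv 0\pmod\ell$ says nothing about $|P_1\cap A_1\cap\cdots\cap A_j|$ when the witnesses $A_1,\ldots,A_j$ of irregularity are unrelated to the other petals. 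You will need a different mechanism to bound the number of irregular sets by a constant depending only on $\ell$ and $k$.

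There is also a smaller gap in your $c=0$ argument. You select $A_1,\ldots,A_{k-1}$ from the full atomic family $\mathcal{A}$ with $\bigcap_iA_i=B$, but the weak hypothesis only applies to tuples from $\cF$, and you have no guarantee that these particular $A_i$ lie in $\cF$ (only $\cF^\star\subset\mathcal{A}$, not $\mathcal{A}\subset\cF$). A workable fix is to observe that if $|\cF|>2^{n/\ell}$ then $|\mathcal{A}\setminus\cF^\star|$ is bounded by a constant, so for each atom $B$ there are many sets of $\cF^\star\subset\cF$ containing $B$; one can then show that the $(k-1)$-fold intersections of distinct members of $\cF^\star$ hit more than $2^{n/\ell-1}$ elements of $\{0,1\}^{n/\ell}$, which forces the linear form $S\mapsto |A\cap S|\bmod\ell$ to vanish identically and hence $|A\cap B|\equiv 0\pmod\ell$ for every atom $B$. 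But this takes an extra argument beyond what you wrote.
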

However, it is shown in \cite[Section 4]{Gishboliner2022Small} that any theorem that
follows the format of \Cref{SudakovStructureThm}
or of \Cref{MainThmGeneral} can be transformed into a theorem that
yields the stronger statement of \Cref{SudakovMainResult}, without
modifying the value of $k$. 
Therefore, in the present paper we focus on the statements of \Cref{MainThmGeneralizedPEventown}, 
\Cref{MainThmPEventownExtremal} and \Cref{MainThmGeneral} without
dwelling on refinements that weaken the $k$-wise $\ell$-divisibility hypothesis.

\subsection{Methods}
We can think of a family $\cF\subset 2^{[n]}$ as a set of functions
$[n]\rightarrow\{0,1\}$, or equivalently as binary $n$-tuples that can be
embedded in any vector space $\F^n$ for any field $\F$. 
Intersecting two subsets $A,B$ corresponds therefore to taking the
product of the associated functions, or to taking the coordinate-wise product of
the corresponding vectors in $\F^n$. If $v=(v(1),v(2),\ldots,v(n))$ and
$w=(w(1),\ldots, w(n))$ are two vectors of $\F^n$, let us therefore denote
by $v*w$ the vector $(v(1)w(1),\ldots v(n)w(n))$. If $C$ and $D$ are two
vector subspaces of $\F^n$, we denote by $C*D$ the subspace generated by all
products $c*d$, for $c\in C$ and $d\in D$. To lighten notation, we shall write
$vw$ and $CD$ rather than $v*w$ and $C*D$ when no confusion should arise. 
The product $CD$ of spaces $C$ and $D$ has been called the Hadamard product, the
Schur product, the star product and we shall refer to it here simply as
``product''. Products of spaces have found numerous applications in Coding
theory and related fields, see~\cite{randriam15} for an extensive survey.
When taking the product of a space $C$ with itself, following \cite{randriam15}
we shall write $C^{\spn{2}}=C*C,\ldots, C^{\spn{i}}=C^{\spn{i-1}}*C,\ldots$.

A crucial observation, already present in \cite{Gishboliner2022Small}, is that
if $\cF$ is $k$-wise $p$-divisible for some prime $p$, then the functions of $\cF$
generate a sub-vector space $V$ in $\F_p ^n$, where $\F_p$ is the finite
field of size $p$, that must satisfy:
\begin{equation}\label{eq:one}
\one\in (V^{\spn{k}})^\perp.
\end{equation}
In \eqref{eq:one}, the vector $\one$ denotes the all-one vector and orthogonality is relative to
the standard inner product. Our proof strategy is therefore to study the
sequence
\[
V,V^{\spn{2}},\ldots ,V^{\spn{k}}.
\]
If this sequence grows too quickly, then it will eventually fill up the whole
space $\F_p ^n$ and contradict \eqref{eq:one}. To analyse what happens when the
sequence grows sufficiently slowly to allow \eqref{eq:one}, our main tool will be
{\em Kneser's Theorem for codes}~\cite[Theorem 3.3]{KneserCode} (see also \cite{beck2017}). It states
 that for any two non-zero codes (vector spaces) $C,D\subset\mathbb{F}^n$, we have
\[
\operatorname{dim}CD\geq\operatorname{dim}C+\operatorname{dim}D-\operatorname{dim}\operatorname{St}(CD),
\]
where $\operatorname{St}(CD)=\{x\in \mathbb{F}^n:xCD\subset CD\}$.
Kneser's Theorem for codes is named after Kneser's original Theorem on Abelian
groups \cite{kneser1953}, which is often used in additive combinatorics. The
version we use here really is about
vector spaces, but was first proved in a Coding Theory context, hence the
reference to codes used here as shorthand for vector space. Our proof strategy
will consist in trying to show that the stabiliser of $V^{\spn{i}}$ must grow with
$i$ until eventually the dimension of the stabiliser of $V^{\spn{k}}$ must
equal the dimension of $V^{\spn{k}}$. When this happens, $\cF$ must be included in an
atomic family with $p$-divisible atoms.

\medskip

The paper is organised as follows: \Cref{section:Kneser's theorem for
codes} will give some background on Kneser's Theorem for codes.
\Cref{section: Generalized Eventown Theorem for prime-divisible set
families} is devoted to proving \Cref{MainThmGeneralizedPEventown}
and \Cref{MainThmPEventownExtremal}. 
In \Cref{section:Establishing the atomicity of k-closed families}, we prove  \Cref{MainThmGeneral}.
Finally, \Cref{sec:conclusion} gives some concluding comments.

\section{Kneser's theorem for codes}\label{section:Kneser's theorem for codes}

Let $n\in \Z_{>0}$. For every element $v\in\F^n$, we use $v(i)$ to denote the $i$-th coordinate of $v$. The \textit{support} of $v$ is $\supp(v)=\{i\in[n]:v(i)\neq 0\}$; the \textit{support} of a code (vector space) $C\subset\F^n$ is $\supp(C)=\{i\in[n]:v(i)\neq 0\text{ for some }v\in C\}$. We say that a code $C\subset\F^n$ is of \textit{full-support} if the support of $C$ is $[n]$.

\medskip

Kneser's Theorem for codes will be the central tool in this article. It involves the notion of the {\em stabiliser} of a code.

\begin{defn}[stabiliser]
    Let $\F$ be a field. Let $C\subset\F^n$ be a code. Then the \textit{stabiliser} of $C$ is defined as
    \[
    \operatorname{St}(C)=\{ x\in\F^n:xC\subset C \}.
    \]
\end{defn}

Some comments are in order. First note that $\operatorname{St}(C)\subset\F^n$ is also a code.
Notice also that if $C=C_1\oplus C_2$, where $C_1$ and $C_2$ are codes with disjoint supports, then 
vectors that are constant on the support of $C_1$ and constant on the support of $C_2$ belong to the stabiliser $\St(C)$. We have a converse result: indeed, the stabiliser $\operatorname{St}(C)$ of a code $C$ is not only a code, it is also stable by the product operation $(*)$, which makes it a subalgebra of $\F^n$. From this it is not difficult to prove that the stabiliser is generated by a basis of vectors of disjoint supports and that are constant on their supports (\cite[Lemma 2.7]{KneserCode}. It follows that 
we have the following characterisation of the stabiliser of a code:

\begin{prop}[Lemma 2.10, \cite{KneserCode}]\label{StabiliserDecomposition}
    Let $\F$ be a field. Then any full-support code $C\subset\F^n$ decomposes as
    \[
    C=C_1\oplus\cdots\oplus C_m,
    \]
    where $m=\dim(\operatorname{St}(C))$ and the supports of $C_1,\ldots,C_m$ are non-empty and form a partition of $[n]$. This decomposition is unique and maximal, the $C_i$'s do not decompose into a direct sum of more than one code with disjoint non-zero supports.
\end{prop}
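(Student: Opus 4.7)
The plan is to exploit the fact that $\St(C)$, being a subalgebra of $(\F^n,+,*)$ that contains $\one$ (since $\one*c=c$ for every $c\in C$), is essentially a commutative unital algebra of diagonal operators on $\F^n$. The cited \cite[Lemma~2.7]{KneserCode} provides a basis $e_1,\ldots,e_m$ of $\St(C)$ consisting of indicator vectors of pairwise disjoint subsets $P_1,\ldots,P_m\subseteq[n]$. Writing $\one\in\St(C)$ in this basis and evaluating coordinate by coordinate forces $\one=e_1+\cdots+e_m$, whence the $P_i$'s form a partition of $[n]$. This is where the full-support hypothesis matters: it guarantees that the partition truly covers $[n]$ rather than only $\supp(C)$.

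With the partition in hand, I would set $C_i:=e_iC=\{e_ic:c\in C\}$. Since $e_i\in\St(C)$ we have $C_i\subseteq C$ and $\supp(C_i)\subseteq P_i$; the full-support property of $C$ gives the reverse inclusion, since for each $j\in P_i$ there exists $c\in C$ with $c(j)\neq 0$, and then $(e_ic)(j)\neq 0$. Decomposing each codeword as $c=\one*c=\sum_i(e_i*c)$ shows $C=C_1+\cdots+C_m$, and the disjointness of the $P_i$'s upgrades this to a direct sum $C=C_1\oplus\cdots\oplus C_m$ with $m=\dim\St(C)$.

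Uniqueness and maximality will both follow from a single contrapositive observation. Given any decomposition $C=D_1\oplus\cdots\oplus D_k$ into codes with pairwise disjoint nonempty supports $Q_1,\ldots,Q_k$, the indicator $\one_{Q_j}$ lies in $\St(C)$, because for every $c=d_1+\cdots+d_k\in C$ one has $\one_{Q_j}*c=d_j\in D_j\subseteq C$. The $\one_{Q_j}$'s are linearly independent, so $k\leq m$. If some $C_i$ admitted a nontrivial refinement, this construction would produce an element of $\St(C)$ whose support is a proper nonempty subset of $P_i$; but every vector in $\St(C)=\spn{e_1,\ldots,e_m}$ is supported on a union of blocks $P_j$, contradicting this. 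Hence the $C_i$'s are undecomposable, proving maximality, and any decomposition of $C$ into codes with disjoint supports must coincide with $C_1,\ldots,C_m$, proving uniqueness.

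The step I expect to be the most delicate is the transition from the abstract basis of $\St(C)$ produced by Lemma~2.7 to a genuine partition of $[n]$: this hinges on $\one\in\St(C)$ lying in $\spn{e_1,\ldots,e_m}$, which is precisely where the full-support hypothesis of $C$ enters decisively. Everything else is a direct transcription of the correspondence between subalgebras of $(\F^n,+,*)$ containing $\one$ and partitions of $[n]$.
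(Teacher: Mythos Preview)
Your argument is correct and follows precisely the outline the paper sketches in the paragraph preceding the proposition (the paper does not give a full proof, citing instead \cite[Lemma~2.10]{KneserCode}): use that $\St(C)$ is a subalgebra, invoke \cite[Lemma~2.7]{KneserCode} to get an idempotent basis, and read off the decomposition. One minor remark on your ``most delicate'' step: the fact that the $P_i$ partition all of $[n]$ already follows from $\one\in\St(C)$ regardless of the support of $C$; the full-support hypothesis is really needed where you use it a few lines later, to guarantee $\supp(C_i)=P_i$ (and hence that each $C_i$ is nonzero), so your attribution of its role is slightly off but the proof itself is unaffected.
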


Kneser's Theorem for codes states:

\begin{thm}[Theorem 3.3, \cite{KneserCode}]\label{Kneser}
    Let $\F$ be a field. Let $C,D\subset\F^n$ be two codes. Then
    \[
    \dim CD\geq\dim C+\dim D-\dim\operatorname{St}(CD).
    \]
\end{thm}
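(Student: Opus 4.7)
The plan is to follow the classical strategy for Kneser-type results: use the stabiliser to reduce to a setting where $CD$ is ``indecomposable'', and then prove a base case that captures the combinatorial heart of the theorem.

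Set $H := \St(CD)$. The stabiliser of any code is a subalgebra of $(\F^n,*)$ containing the all-one vector $\one$, so $HC\supset C$ and $HD\supset D$. Moreover $(HC)\cdot D\subset H(CD)\subset CD$, and since $C\subset HC$ also $CD\subset (HC)\cdot D$, so $(HC)\cdot D=CD$, and similarly $C\cdot(HD)=CD$. Replacing $C$ by $HC$ and $D$ by $HD$ therefore leaves $CD$ (and hence $H$) unchanged, while $\dim C$ and $\dim D$ can only grow; proving the inequality in this enlarged situation would imply the original. Thus I may assume $H\subset\St(C)\cap\St(D)$.

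Now invoke \Cref{StabiliserDecomposition}: after passing to $\supp(CD)$ (the coordinates outside contribute identically to $\dim C$, $\dim D$, and $\dim\St(CD)$), $H$ determines a partition $B_1\sqcup\cdots\sqcup B_m$ of the support of $CD$ with $m=\dim H$, and $CD=\bigoplus_{i=1}^m C_iD_i$, where each summand $C_iD_i\subset \F^{B_i}$ is indecomposable, i.e.\ $\St(C_iD_i)=\F\cdot\one_{B_i}$. Since $H\subset\St(C)\cap\St(D)$, the same partition splits $C=\bigoplus C_i$ and $D=\bigoplus D_i$ with $\supp(C_i),\supp(D_i)\subset B_i$. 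Summing dimensions block by block reduces the theorem to the following base case: if $C,D\subset\F^b$ are non-zero full-support codes with $\St(CD)=\F\cdot\one$, then $\dim CD\geq\dim C+\dim D-1$.

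The main obstacle is this base case. I would prove it by induction on $\dim D$. When $\dim D=1$, say $D=\spn{d}$, the vector $d$ must itself have full support, for otherwise the indicator of the complement of $\supp(d)$ would enlarge $\St(CD)$ beyond $\F\cdot\one$; the map $c\mapsto c*d$ is then injective on $C$, giving $\dim CD=\dim C$ and closing the base case. For $\dim D\geq 2$, I would perform an $e$-transform in the spirit of Kneser's original argument for abelian groups: choose $0\neq d\in D$ and pass to a pair $(C',D')$ with $\dim D'<\dim D$, $C'D'=CD$, and $\St(C'D')=\F\cdot\one$ still, so that the inductive hypothesis applies. The delicate point, and the genuine content of the theorem, is to design this transform inside $\F^n$ so that it simultaneously lowers $\dim D$ and prevents the stabiliser of the new product from expanding; this is where the coding-theoretic argument diverges most substantially from the group-theoretic prototype.
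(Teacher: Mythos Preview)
The paper does not prove \Cref{Kneser}; it is quoted verbatim from \cite[Theorem~3.3]{KneserCode} and used as a black box. So there is no proof in the paper to compare your attempt against.

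As for the proposal itself: your reduction to the indecomposable case (replace $C,D$ by $HC,HD$, then split along the partition induced by $H=\St(CD)$) is indeed the standard opening move, and it matches the architecture of the original proof in \cite{KneserCode}. One small inaccuracy: the parenthetical ``the coordinates outside contribute identically to $\dim C$, $\dim D$, and $\dim\St(CD)$'' is not literally correct---a coordinate $i\notin\supp(CD)$ always contributes $1$ to $\dim\St(CD)$ (since $e_i\in\St(CD)$), but may contribute $0$ to $\dim C+\dim D$. This does not break the reduction (removing such a coordinate only makes the target inequality stronger, and after the replacement $C\leftarrow HC$, $D\leftarrow HD$ the discrepancy in fact disappears), but the justification as written is wrong.

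The real gap is the base case. You reduce correctly to: if $\St(CD)=\F\cdot\one$ then $\dim CD\geq\dim C+\dim D-1$. You then propose to induct on $\dim D$ via ``an $e$-transform in the spirit of Kneser's original argument'', but you do not specify the transform, and you explicitly flag that preventing the stabiliser from growing under the transform is ``the delicate point''. That delicate point \emph{is} the theorem. In the abelian-group Kneser theorem the Dyson $e$-transform works because translates have the same cardinality; in $\F^n$ under coordinatewise product there is no such uniformity, and the argument in \cite{KneserCode} has to do something genuinely different here (it proceeds via an auxiliary ``Cauchy--Davenport for codes'' over extension fields, not a naive $e$-transform). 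As written, your proposal is a correct reduction followed by an unproved claim that the hard part can be done.
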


Equivalently, \Cref{Kneser} states that if $\dim CD\leq \dim C +\dim D -m$, then $CD$ must decompose into the direct sum of at least $m$ codes with disjoint non-zero supports.

From now on, ``Kneser's Theorem'' will refer to \Cref{Kneser}.

Any code $C$ is stabilised by the scalar multiples of the all-one vector $\one$. When
these are the only stabilisers of $C$, i.e. when $\dim\St(C)=1$, we shall say
that $C$ has {\em trivial stabiliser}. Since we will need to study powers
$C^{\spn{k}}$ of a code $C$, the following straightforward consequence of
Kneser's Theorem will be of use to us. 

\begin{lem}
\label{lem:kneserk}
Let $k\in\Z_{>0}$ and let $\F$ be a field. Let $C\subset\F^n$ be a code such that $C^{\spn{k}}$ has trivial stabiliser. Then $\dim C^{\spn{k}}\geq k\dim C -k+1$.
\end{lem}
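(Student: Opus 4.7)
The plan is to proceed by induction on $k$. The base case $k=1$ is the tautology $\dim C \geq \dim C$. For the inductive step, apply Kneser's Theorem (\Cref{Kneser}) to the pair of codes $C^{\spn{k-1}}$ and $C$, viewing $C^{\spn{k}}$ as their product. This gives
\[
\dim C^{\spn{k}} \geq \dim C^{\spn{k-1}} + \dim C - \dim\St(C^{\spn{k}}) = \dim C^{\spn{k-1}} + \dim C - 1,
\]
where the last equality uses the hypothesis that $C^{\spn{k}}$ has trivial stabiliser.

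Before I can feed $\dim C^{\spn{k-1}}$ into the inductive hypothesis, I need to verify that $C^{\spn{k-1}}$ also has trivial stabiliser. The key observation is the monotonicity $\St(C^{\spn{k-1}}) \subset \St(C^{\spn{k}})$, which follows directly from the definitions: if $x \in \F^n$ satisfies $x \cdot C^{\spn{k-1}} \subset C^{\spn{k-1}}$, then $x \cdot C^{\spn{k}} = (x \cdot C^{\spn{k-1}}) \cdot C \subset C^{\spn{k-1}} \cdot C = C^{\spn{k}}$. Since every stabiliser contains the scalar multiples of $\one$ and therefore has dimension at least $1$, while the outer stabiliser has dimension exactly $1$, the inner one must have dimension $1$ as well.

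Applying the inductive hypothesis yields $\dim C^{\spn{k-1}} \geq (k-1)\dim C - (k-1) + 1$, and substituting into the previous inequality gives the desired bound $\dim C^{\spn{k}} \geq k\dim C - k + 1$. The whole argument is short and presents no real difficulty; the only point worth flagging is the stabiliser monotonicity under the product, which is what enables the induction to unfold without additional hypotheses on the intermediate powers $C^{\spn{i}}$.
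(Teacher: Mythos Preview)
Your proof is correct and follows essentially the same approach as the paper: both apply Kneser's Theorem to the pair $(C^{\spn{k-1}},C)$ and use the inclusion $\St(C^{\spn{k-1}})\subset\St(C^{\spn{k}})$ to ensure all intermediate powers have trivial stabiliser. The paper unrolls the resulting chain of inequalities directly rather than phrasing it as a formal induction, and it states the stabiliser monotonicity without justification, whereas you supply the (easy) verification.
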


\begin{proof}
If $C^{\spn{k}}$ has trivial stabiliser, then so do $C,C^{\spn{2}},\ldots, C^{\spn{k-1}}$. 
Kneser's Theorem \ref{Kneser} implies therefore
\begin{align*}
\dim C^{\spn{k}}&\geq \dim C^{\spn{k-1}} + \dim C -1\\
&\geq \dim C^{\spn{k-2}} + 2(\dim C-1)\\
&\geq\cdots\\
&\geq \dim C + (k-1)(\dim C -1)\\
&=k\dim C -k+1,
\end{align*}
as claimed.
\end{proof}

\section{Prime-divisible set families}
\label{section: Generalized Eventown Theorem for prime-divisible set families}

In this section, we will prove \Cref{MainThmGeneralizedPEventown} and \Cref{MainThmPEventownExtremal}. 

Since we will apply \Cref{StabiliserDecomposition} to vector spaces generated
by set families, we also define the notion of \textit{full-support} for set
families. The support of a set family $\cF\subset 2^{[n]}$ is the union of its
members,
$\supp(\cF)=\bigcup_{F\in\cF}F$. We shall say that $\cF$ is of \textit{full-support} if $\supp(\cF)=[n]$.

\begin{rmk}\label{rmk:assume full support}
Since a family $\cF\subset 2^{[n]}$ can be considered to be defined over its
support, and since $2^{\lfloor n'/p\rfloor}\leq2^{\lfloor n/p\rfloor}$ for
$n'\leq n$,
we observe that it is sufficient to prove \Cref{MainThmGeneralizedPEventown} and \Cref{MainThmPEventownExtremal} for full-support set families. 
\end{rmk}

For the rest of this section, we will assume all set families are of full-support.

As previously mentioned, we shall view the elements of a set family $\cF\subset 2^{[n]}$ as elements of $\{0,1\}^n$.
We shall denote by $\cF^k$ the set of (coordinate-wise) products of $k$, not necessarily distinct, elements of $\cF$.
When the family $\cF$ is $p$-divisible, for some prime $p$, we shall regularly embed $\cF$ in the vector space $\F_p^n$.

Let us denote by $\inner{x}{y}=x_1y_1+\cdots +x_ny_n\in\F_p$ the standard inner product of two vectors 
$x=(x_1,\ldots,x_n),y=(y_1,\ldots,y_n)\in\F_p^n$. 
Let us remark that a family $\cF$ is $k$-wise $p$-divisible if and only if $\inner{f}{\one}=0$ for any vector $f\in \cF^k$. If we denote by $V$ the $\F_p$-vector space generated by $\cF$, we therefore have that $\cF$ is $k$-wise $p$-divisible if and only if $V^{\spn{k}}\subset\one^{\perp}$. We state the following proposition for future reference:

\begin{prop}\label{codes k closed}
  Let $k\in\Z_{>0}$ and let $p$ be a prime integer. The family $\cF\subset 2^{[n]}$ is $k$-wise   $p$-divisible if and only if the $\F_p$-vector space $V$ generated by $\cF$ satisfies $V^{\spn{k}}\subset\one^{\perp}$.
\end{prop}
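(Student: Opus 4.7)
The statement is a direct translation of the divisibility condition into linear-algebraic language, so the plan is to unpack both sides and use multilinearity of the coordinate-wise product. The key identity I want to exploit is that for characteristic vectors $\mathbf{1}_{A_1},\ldots,\mathbf{1}_{A_k}$ of sets $A_1,\ldots,A_k\in\cF$ (viewed in $\F_p^n$), the Hadamard product $\mathbf{1}_{A_1}*\cdots*\mathbf{1}_{A_k}$ equals $\mathbf{1}_{A_1\cap\cdots\cap A_k}$, and so
\[
\inner{\mathbf{1}_{A_1}*\cdots*\mathbf{1}_{A_k}}{\one}=|A_1\cap\cdots\cap A_k|\bmod p.
\]
This single identity already handles the ``only if'' direction when applied to generators.

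For the ``if'' direction, I would first observe that $V^{\spn{k}}$ is spanned, by definition, by all $k$-fold products $v_1*\cdots*v_k$ with $v_i\in V$. Writing each $v_i$ as an $\F_p$-linear combination of characteristic vectors from $\cF$ and expanding the product by multilinearity of $*$, I express every generator of $V^{\spn{k}}$ as an $\F_p$-linear combination of vectors of the form $\mathbf{1}_{A_1}*\cdots*\mathbf{1}_{A_k}$ with $A_i\in\cF$. Because $\inner{\cdot}{\one}$ is a linear form on $\F_p^n$, it suffices to check that it vanishes on these generating products, which is precisely the identity displayed above combined with the hypothesis $p\mid |A_1\cap\cdots\cap A_k|$. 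Hence $V^{\spn{k}}\subset\one^{\perp}$.

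For the ``only if'' direction I run the argument in reverse: assuming $V^{\spn{k}}\subset\one^\perp$, apply the containment to the specific product $\mathbf{1}_{A_1}*\cdots*\mathbf{1}_{A_k}\in V^{\spn{k}}$ for any $A_1,\ldots,A_k\in\cF$; the displayed identity then reads $|A_1\cap\cdots\cap A_k|\equiv 0\pmod p$, which is $k$-wise $p$-divisibility.

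There is no real obstacle here; the only thing to be mildly careful about is the multilinearity step, i.e.\ verifying that the product operation $*$ distributes over $\F_p$-linear combinations so that products of linear combinations of characteristic vectors are themselves linear combinations of products of characteristic vectors. This is immediate from the coordinate-wise definition of $*$, and the rest is just the linearity of the inner product with $\one$.
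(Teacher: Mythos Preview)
Your argument is correct and is exactly the reasoning the paper gives (the paper treats this proposition as a remark rather than giving a formal proof, noting simply that $k$-wise $p$-divisibility is equivalent to $\inner{f}{\one}=0$ for all $f\in\cF^k$, and that $\cF^k$ spans $V^{\spn{k}}$). One small slip: you have the ``if'' and ``only if'' labels reversed in your second and third paragraphs---the paragraph concluding $V^{\spn{k}}\subset\one^\perp$ from the divisibility hypothesis is the \emph{only if} direction, and the converse is the \emph{if} direction.
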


For a family $\cF\subset\{0,1\}^n$, we will need to relate the cardinality of $\cF$ to the dimension of the $\F_p$-vector space $V$ generated by $\cF$. It is straightforward, e.g. \cite[Theorem 2]{dimV}, to show that 
\begin{equation}
    \label{eq:Odlyzko}
    |V\cap\{0,1\}^n|\leq 2^{\dim V}.
\end{equation}
Indeed, by Gaussian elimination, after possibly permuting coordinates, there exists a basis of $V$ whose vectors make up the rows of a matrix $G$ of the form $G=[I_r | A]$, where $I_r$ is the $r\times r$ identity matrix, $r=\dim V$. (In Coding Theory language, $G$ is a generator matrix of $V$ in systematic form). Therefore, the only linear combinations of the rows of $G$ that yield vectors in $\{0,1\}^n$ must have coefficients in $\{0,1\}$: hence, $|V\cap\{0,1\}^n|\leq 2^{\dim V}$.

The above upper bound cannot be improved in all generality because atomic families $\cF$ achieve it. However, 
we shall prove the following improvement for families that we will be dealing with:

\begin{lem}\label{lem: improved Odlyzo}
    Let $p\geq 3$ be a prime number and let $\cF\subset\{0,1\}^n$ contains at least two non-zero subsets. Let $V$ be $\F_p$-vector space generated by $\cF$, and suppose $\dim(\operatorname{St}(V^{\spn{3}}))=1$. Then $|V\cap\{0,1\}^n|\leq 2^{\dim(V)-1}$.
\end{lem}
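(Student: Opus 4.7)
My plan is to argue by contradiction. Suppose $|V\cap\{0,1\}^n|>2^{r-1}$ where $r=\dim V$, and derive $\dim\St(V^{\spn{3}})\geq 2$ to contradict the hypothesis. After permuting coordinates, pick a systematic generator matrix $G=[I_r\mid A]$ for $V$ with $A\in\F_p^{r\times(n-r)}$; as in the Odlyzko bound recalled earlier, the 0/1 vectors of $V$ correspond bijectively to $T=\{x\in\{0,1\}^r:xA\in\{0,1\}^{n-r}\}$, so the assumption reads $|T|>2^{r-1}$.

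For each $i\in[r]$, pigeonhole applied to the projection of $T$ onto the $r-1$ coordinates other than $i$ (whose codomain has size $2^{r-1}<|T|$) produces $x,x'\in T$ agreeing outside coordinate $i$; thus $e_iA=\pm(x'A-xA)\in\{-1,0,1\}^{n-r}$, so that $A\in\{-1,0,1\}^{r\times(n-r)}$. Next, for each column $a$ of $A$, the inclusion $T\subseteq S_a:=\{x\in\{0,1\}^r:xa\in\{0,1\}\bmod p\}$ forces $|S_a|>2^{r-1}$, and a case analysis on $(P_a,N_a):=(\{i:a_i=1\},\{i:a_i=-1\})$, using that $2\not\equiv 0,1\pmod p$ since $p\geq 3$, restricts each column of $A$ to one of a handful of admissible shapes---principally $e_i$ and $e_i\pm e_j$, plus a few higher-weight patterns such as $e_i+e_j+e_k$ when $p=3$.

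The final step---which I expect to be the main obstacle---is to convert this admissible-column data into a non-trivial element of $\St(V^{\spn{3}})$. The key algebraic input is that every element of $\cF$ is a 0/1 vector, hence idempotent, which gives $V\subseteq V^{\spn{2}}\subseteq V^{\spn{3}}$, and that for any two 0/1 vectors $u,v\in V$ the product $uv$ is the indicator of $\supp(u)\cap\supp(v)$ and lies in $V^{\spn{2}}\subseteq V^{\spn{3}}$. Before carrying out the argument I would first reduce to the ``distinguishing'' case in which no two coordinates of $[n]$ agree on every 0/1 vector of $V$, since collapsing equivalent coordinates preserves $\dim V$, $|V\cap\{0,1\}^n|$, and $\dim\St(V^{\spn{3}})$. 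In this reduced setting I aim to use the $|T|>2^{r-1}$ assumption together with the admissible shapes of the columns of $A$ to exhibit 0/1 vectors of $V$ whose pairwise products, triple products, and $\F_p$-linear combinations generate a small-support indicator $\chi_J\in V^{\spn{3}}$ for some nonempty proper $J\subsetneq[n]$; then $\chi_J\cdot V^{\spn{3}}\subseteq V^{\spn{3}}$, and by \Cref{StabiliserDecomposition} this forces $\dim\St(V^{\spn{3}})\geq 2$, the desired contradiction. The subtle point is ruling out the possibility that, despite the density $|T|>2^{r-1}$, the admissible columns together with the distinguishing hypothesis conspire to place every product in $V^{\spn{3}}$ at full support.
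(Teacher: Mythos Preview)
Your setup and the first two reductions match the paper's Claims~1 and~2 in spirit, but the final step contains a genuine logical gap. You assert that once you find an indicator $\chi_J\in V^{\spn{3}}$ for a nonempty proper $J\subsetneq[n]$, then $\chi_J\cdot V^{\spn{3}}\subseteq V^{\spn{3}}$. This implication is false in general: $V^{\spn{3}}$ is not closed under multiplication (typically $V^{\spn{3}}\cdot V^{\spn{3}}=V^{\spn{6}}\supsetneq V^{\spn{3}}$), and idempotence of $\chi_J$ alone does not help, since what you need is $\chi_J\cdot v_{i_1}v_{i_2}v_{i_3}\in V^{\spn{3}}$ for \emph{all} triples of generators, not just that $\chi_J$ itself lies there. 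So finding a short indicator inside $V^{\spn{3}}$ is not the same as finding a nontrivial stabiliser of $V^{\spn{3}}$, and your proposed mechanism does not close the contradiction.

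The paper avoids this trap by running the argument in the forward direction and using the trivial-stabiliser hypothesis \emph{constructively} rather than by contradiction. After establishing (as you do) that the entries of $G$ lie in $\{-1,0,1\}$, the paper proves a stronger two-column constraint than your single-column shape analysis: either $|\Lambda|\le 2^{r-1}$, or $v_iv_j=\pm\one_{\supp(v_iv_j)}$ for all $i\neq j$. This is what makes the stabiliser hypothesis usable: under it, one shows step by step that $\dim\St(V^{\spn{3}})=1$ forces the existence of four basis rows $v_{i_1},\dots,v_{i_4}$ with \emph{strictly nested} supports $\supp(v_{i_1}v_{i_2}v_{i_3}v_{i_4})\subsetneq\supp(v_{i_1}v_{i_2}v_{i_3})\subsetneq\supp(v_{i_1}v_{i_2})\subsetneq\supp(v_{i_1})$. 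These four rows then yield three columns on which a direct count over $(\lambda_{i_1},\dots,\lambda_{i_4})\in\{0,1\}^4$ gives at most $8$ admissible quadruples, hence $|\Lambda|\le 8\cdot 2^{r-4}=2^{r-1}$. The missing idea in your plan is precisely this extraction of the nested four-row configuration from the stabiliser hypothesis; without it, neither the column-shape catalogue nor the search for an indicator in $V^{\spn{3}}$ leads anywhere.
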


The proof of \Cref{lem: improved Odlyzo} is more technical than the rest, and we therefore postpone it to the end of this section.

For the rest of this section, $p$ will be a fixed prime integer, $\cF\subset\{0,1\}^n$ will be a $p$-divisible family, and $V$ will denote the $\F_p$-vector space generated by $\cF$.

We will handle the case when $V$ has non-trivial stabiliser using \Cref{StabiliserDecomposition} and induction on $n$. To deal with the case when $V$ has trivial stabiliser, we have the following lemma.

\begin{lem}
\label{lem:p+1}
Let $t\in \Z_{>0}$. Assume $\cF$ contains at least two non-zero subsets. Suppose that $V^{\spn{t+1}}$ has
trivial stabiliser. Then we have $\dim V>t+1$ and $\dim V^{\spn{t+1}}>(t+1)^2$.
\end{lem}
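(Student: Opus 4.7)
The plan is to reduce to showing $\dim V\geq t+2$, since the bound $\dim V^{\spn{t+1}}>(t+1)^2$ then follows immediately from \Cref{lem:kneserk}:
\[
\dim V^{\spn{t+1}}\geq(t+1)\dim V-t\geq(t+1)(t+2)-t=(t+1)^2+1.
\]
As a preliminary, the hypothesis that $\cF$ contains two distinct non-empty subsets yields $\dim V\geq 2$, because distinct non-zero $\{0,1\}$-vectors are automatically $\F_p$-linearly independent: no non-zero scalar $\lambda\in\F_p\setminus\{1\}$ can carry a non-zero $\{0,1\}$-vector to another one.

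I would then argue by contradiction and suppose $d:=\dim V$ satisfies $2\leq d\leq t+1$, picking a basis $\chi_{A_1},\ldots,\chi_{A_d}$ of $V$ drawn from $\cF$; each $A_i$ is non-empty, and $\bigcup_i A_i=[n]$ by full support. Consider the Boolean atoms $B_S=\bigcap_{i\in S}A_i\setminus\bigcup_{j\notin S}A_j$, for $S\subseteq[d]$, and let $W\subset\F_p^n$ be the subspace of vectors constant on each atom; equivalently $W=\spn{\chi_{B_S}:B_S\neq\emptyset}$, of dimension $s$ (the number of non-empty atoms). Full support gives $B_\emptyset=\emptyset$, so only non-empty $S$ contribute. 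Since $W$ is a $*$-subalgebra of $\F_p^n$ containing $\one$, an immediate check gives $\St(W)=W$, so $\dim\St(W)=s$. The inclusion $V^{\spn{d}}\subset W$ is immediate from $V\subset W$ and $W$ being $*$-closed, and the inclusion--exclusion identity
\[
\chi_{B_S}=\sum_{T\supseteq S}(-1)^{|T\setminus S|}\prod_{i\in T}\chi_{A_i},
\]
valid for each non-empty $S$, gives $W\subset V^{\spn{d}}$, since every summand on the right lies in $V^{\spn{|T|}}\subset V^{\spn{d}}$ (the chain $V\subset V^{\spn{2}}\subset\cdots$ is non-decreasing because $\chi_A*\chi_A=\chi_A$). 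Therefore $V^{\spn{d}}=W$ and $\dim\St(V^{\spn{d}})=s$.

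A short induction shows the monotonicity $\St(V^{\spn{d}})\subset\St(V^{\spn{t+1}})$: if $x\in\St(V^{\spn{d}})$ then $xV^{\spn{d+1}}=V\cdot(xV^{\spn{d}})\subset V^{\spn{d+1}}$, and one iterates. Since $V^{\spn{t+1}}$ has trivial stabiliser by hypothesis, we obtain $s=1$; the unique non-empty atom must then be $[n]$, forcing every $A_i=[n]$ and hence $\dim V=1$, which contradicts $d\geq 2$. The main technical point in this plan is the identification $V^{\spn{d}}=W$: one must take care to keep the inclusion--exclusion formula inside $V^{\spn{d}}$ without appealing to $\one\in V$, which is made possible precisely by the full-support assumption causing $B_\emptyset$ to be empty.
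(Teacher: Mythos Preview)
Your proof is correct and rests on the same key observation as the paper's: because $V$ is generated by idempotent $\{0,1\}$-vectors, once $\dim V=d\leq t+1$ the sequence of powers stabilises, forcing $V^{\spn{t+1}}$ to have a large stabiliser. The paper's execution is considerably shorter, though: rather than identifying $V^{\spn{d}}$ with the Boolean atom algebra $W$ via inclusion--exclusion and then invoking monotonicity of stabilisers, it simply notes that idempotency of a basis $\mathcal{B}\subset\{0,1\}^n$ with $|\mathcal{B}|\leq t+1$ gives $V^{\spn{k}}=V^{\spn{t+1}}$ for all $k\geq t+1$, hence $(V^{\spn{t+1}})^{\spn{2}}=V^{\spn{t+1}}$, so $V^{\spn{t+1}}\subset\St(V^{\spn{t+1}})$ --- already a contradiction since $\dim V^{\spn{t+1}}\geq\dim V\geq 2$. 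Your explicit computation of $W$ and of the atom count $s$ is not needed, though it is a valid route to the same conclusion.
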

\begin{proof}
If $\dim V\leq t+1$, then $V$ has a basis $\mathcal{B}$ of $\{0,1\}$-vectors
with $|\mathcal{B}|\leq t+1$.
Since $V^{\spn{k}}$ is generated by all $k$-wise products of vectors of $\mathcal{B}$,
we have that $V^{\spn{k}}=V^{\spn{t+1}}$ for all $k\geq t+1$. In particular
$(V^{\spn{t+1}})^{\spn{2}}=V^{\spn{t+1}}$ which contradicts $V^{\spn{t+1}}$
having trivial stabiliser since $\dim V^{\spn{t+1}}\geq\dim V\geq 2$.
This proves $\dim V>t+1$.

Applying \Cref{lem:kneserk} with $k=t+1$ gives
\[
\dim V^{\spn{t+1}}\geq (t+1)(t+2)-(t+1)+1>(t+1)^2 \qedhere
\]
\end{proof}

It will be useful to consider the restrictions of $\cF$ on some subsets of
$[n]$:

\begin{defn}[restriction of set families]
    For any subset $A\subset [n]$ we define the restriction of $\cF$ to $A$ as
$\cF|_A:=\{F\cap A\,:\,F\in\cF\}$.
\end{defn}

\begin{lem}
\label{lem:C1+C2}
Let $k\in\Z_{>0}$. Let $\cF$ be $k$-wise $p$-divisible. 
Suppose that we have
\[
V^{\spn{k}} = C_1\oplus C_2
\]
where $C_1$ and $C_2$ have disjoint non-zero supports $S_1$ and $S_2$ such that
$[n]=S_1\cup S_2$. Let us define 
\[
\cF_1=\cF|_{S_1}\quad\text{and}\quad\cF_2=\cF|_{S_2}.
\]
Then $\cF_1$ and $\cF_2$ are $k$-wise $p$-divisible. Furthermore we have
$|\cF|\leq |\cF_1||\cF_2|$.
\end{lem}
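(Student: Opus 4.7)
The plan is to reduce both claims to the linear-algebraic setting of \Cref{codes k closed}. The crucial observation I would lean on is that because $C_1$ and $C_2$ have disjoint supports $S_1, S_2$ with $S_1 \sqcup S_2 = [n]$, every $v \in V^{\spn{k}}$ decomposes uniquely as $v = c_1 + c_2$ with $c_i \in C_i$, and $c_i$ is simply $v$ with all coordinates outside $S_i$ zeroed out. Thus the operation of \emph{restricting to} $S_i$ (interpreted as setting coordinates in $[n]\setminus S_i$ to zero) preserves membership in $V^{\spn{k}}$, and hence in $\one^\perp$ by the $k$-wise $p$-divisibility hypothesis on $\cF$ together with \Cref{codes k closed}.

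For the first assertion, I would take any $A_1, \ldots, A_k \in \cF_i$ and write $A_j = F_j \cap S_i$ for some $F_j \in \cF$. Denoting by $f_j$ the $\{0,1\}$-indicator vector of $F_j$ in $\F_p^n$, the indicator of $A_1 \cap \cdots \cap A_k = (F_1 \cap \cdots \cap F_k) \cap S_i$, viewed in $\F_p^n$ (and automatically zero outside $S_i$), coincides with the restriction to $S_i$ of the product $f_1 \cdots f_k \in V^{\spn{k}}$. By the observation above this indicator belongs to $\one^\perp$, so $\langle \one, \cdot\rangle$ evaluates to zero on it, which is exactly the statement $|A_1 \cap \cdots \cap A_k| \equiv 0 \pmod p$. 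This yields $k$-wise $p$-divisibility of $\cF_i$.

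For the second assertion, I would consider the map $\Phi \colon \cF \to \cF_1 \times \cF_2$, $F \mapsto (F \cap S_1, F \cap S_2)$. Because $S_1 \sqcup S_2 = [n]$, any $F \in \cF$ can be recovered from its image via $F = (F \cap S_1) \cup (F \cap S_2)$, so $\Phi$ is injective and $|\cF| \leq |\cF_1| \cdot |\cF_2|$.

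I do not anticipate a real obstacle here: the whole statement is essentially a bookkeeping consequence of the direct-sum decomposition of $V^{\spn{k}}$ along the partition $[n] = S_1 \sqcup S_2$, applied once to $k$-fold products inside $V^{\spn{k}}$ and once set-theoretically to the members of $\cF$.
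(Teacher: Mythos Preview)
Your proposal is correct and follows essentially the same approach as the paper. The paper phrases the first part by identifying $C_i$ (restricted to $S_i$) with the code generated by $\cF_i^k$ and then invoking $C_i\subset V^{\spn{k}}\subset\one^\perp$ via \Cref{codes k closed}, whereas you establish directly that each $k$-fold product from $\cF_i$ lands in $C_i\subset\one^\perp$; the injectivity argument for $|\cF|\leq|\cF_1||\cF_2|$ is identical.
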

\begin{proof}
One checks that $C_i$, restricted to its support, can only be equal to the code generated by 
$\cF_i^k$, $i=1,2$. Furthermore, it follows from \Cref{codes k closed} that $V^{\spn{k}}\subset\one^\perp$. Since $C_i\subset V^{\spn{k}}$, the code $C_i$ also satisfies $C_i\subset\one^\perp$. Therefore, by \Cref{codes k closed}, each $\cF_i$ is $k$-wise $p$-divisible.

Finally, note that the map
\begin{eqnarray*}
\cF & \rightarrow & \cF_1\times\cF_2\\
A & \mapsto & (A\cap S_1, A\cap S_2)
\end{eqnarray*}
is injective, therefore $|\cF|\leq|\cF_1\times\cF_2|=|\cF_1||\cF_2|$.
\end{proof}

We now have enough ingredients to prove \Cref{MainThmGeneralizedPEventown}.

\begin{proof}[Proof of \Cref{MainThmGeneralizedPEventown}]
The case $p=2$ is the Eventown Theorem. Suppose therefore $p\geq 3$. We use induction on $n$. 
Let $\cF$ be $p$-wise $p$-divisible. Since $\cF$ is assumed to be full-support
we have $n\geq p$. If $V^{\spn{p}}$ has a non-trivial stabiliser, then $V^{\spn{p}}=C_1\oplus C_2$ and we can
consider $\cF_1$ and $\cF_2$ given by \Cref{lem:C1+C2}. Let $n_1=|S_1|$ and
$n_2=|S_2|$. By the induction hypothesis $|\cF_1|\leq
2^{\lfloor n_1/p\rfloor}$ and
$|\cF_2|\leq 2^{\lfloor n_2/p\rfloor}$, and by \Cref{lem:C1+C2},
\[
|\cF|\leq|\cF_1||\cF_2|\leq
2^{\lfloor n_1/p\rfloor+\lfloor n_2/p\rfloor}\leq
2^{\lfloor (n_1+n_2)/p\rfloor}=2^{\lfloor n/p\rfloor}
\]
and we are done. It remains to consider the case when $\cF$ contains at least two non-zero subsets and $V^{\spn{p}}$ has a trivial
stabiliser, i.e. $\dim\operatorname{St}(V^{\spn{p}})=1$.

Applying \Cref{lem:kneserk} with $k=p$ we have
\[
\dim V^{\spn{p}} \geq p\dim V -p+1.
\]
Now suppose $|\cF|\geq 2^{\lfloor n/p\rfloor}+1$. Then, by \Cref{lem: improved Odlyzo}, $\dim V\geq \lfloor n/p\rfloor+2$. From this we get
\begin{align*}
\dim V^{\spn{p}}&\geq p\left(\left\lfloor\frac{n}{p}\right\rfloor+2\right) -p+1\\
&\geq p\left(\frac{n-(p-1)}{p}\right)+2p -p+1\\
&=n-(p-1)+p+1\\
&> n,
\end{align*}
a contradiction. Therefore, we must
have $|\cF|\leq 2^{\lfloor n/p\rfloor}.$
\end{proof}

Using a similar argument, we can prove \Cref{MainThmPEventownExtremal} regarding the extremal structure of $p$-divisible set families.

\begin{proof}[Proof of \Cref{MainThmPEventownExtremal}]
We proceed by induction on $n$. Let $\cF$ be $(p+1)$-wise $p$-divisible. 
We have $n\geq p$ since $\cF$ is assumed to be of full-support.
The case $\dim(V)=1$ is trivial. 

If $\dim V\geq 2$, we claim that $\dim\operatorname{St}(V^{\spn{p+1}})\geq 2$.
Assume the contrary, namely that $V^{\spn{p+1}}$ has trivial stabiliser: then \Cref{lem:p+1} implies that $\dim V\geq p+2$ and $\dim V^{\spn{p+1}}>(p+1)^2$. Hence, $n>(p+1)^2$. Let us first consider the case $p=2$. 
By \eqref{eq:Odlyzko} we have $2^{\lfloor n/2\rfloor-1}<|\cF|\leq |V\cap\{0,1\}^n|\leq 2^{\dim V}$. By \Cref{lem:kneserk} we therefore have
\[
\dim V^{\spn{3}}\geq 3\dim V-2\geq
3\left(\frac{n}{2}-\frac{1}{2}\right)-2=n+\frac 12(n-3)-2>n
\]
for $n>7$, a contradiction since we have $n>9$. This proves the claim for $p=2$. Suppose now $p\geq 3$. By \Cref{lem: improved Odlyzo}, we have
\[
2^{\lfloor n/p\rfloor -1}<|\cF|\leq |V\cap\{0,1\}^n|\leq 2^{\dim V-1}
\]
and thus $\dim V\geq\lfloor n/p\rfloor+1$. From this and \Cref{lem:kneserk} we get
\begin{align*}
\dim V^{\spn{p+1}}&\geq (p+1)\left(\left\lfloor\frac{n}{p}\right\rfloor+1\right) -p\\
&\geq (p+1)\left(\frac{n-(p-1)}{p}\right)+(p+1) -p\\
&\geq (p+1)\frac np -\frac{p^2-1}{p} +1\\
&> n + \frac np -p+1.
\end{align*}
Since $n>(p+1)^2$, we must
have $n/p > p$ and we obtain $\dim V^{\spn{p+1}}>n$, a contradiction. This proves the claim.

As $\dim\operatorname{St}(V^{\spn{p+1}})\geq 2$, there exist nonzero codes
$C_1,C_2$ such that $V^{\spn{p+1}}=C_1\oplus C_2$ by \Cref{StabiliserDecomposition}. Consider $\cF_1=\cF|_{S_1}$ and $\cF_2=\cF|_{S_2}$ where
$S_1=\supp(C_1)$ and $S_2=\supp(C_2)$. Let $n_1=|S_1|$ and $n_2=|S_2|$.
If $|\cF_1|\leq 2^{\lfloor n_1/p\rfloor-1}$ and $|\cF_2|\leq 2^{\lfloor
n_2/p\rfloor-1}$, then, applying the last statement of \Cref{lem:C1+C2},
\[
2^{\lfloor n/p\rfloor -1}<|\cF|\leq |\cF_1||\cF_2|\leq 2^{\lfloor n_1/p\rfloor+\lfloor n_2/p\rfloor-2},
\]
which means
\[
\left\lfloor \frac{n}{p}\right\rfloor<\left\lfloor \frac{n_1}{p}\right\rfloor+\left\lfloor \frac{n_2}{p}\right\rfloor -1,\]
a contradiction. Hence, without loss of generality, we may assume that $|\cF_1|> 2^{\lfloor n_1/p\rfloor-1}$. 
By \Cref{lem:C1+C2},  $\cF_1$ is $(p+1)$-wise $p$-divisible and we may
apply the induction hypothesis to $\cF_1$: we get that $\cF_1$ is a subfamily of an
atomic family consisting of some unions of $p$-element subsets. In particular
$n_1=ap$ for a positive integer $a$, and $|\cF_1|\leq 2^a$. Therefore,
\[
2^{\lfloor n/p\rfloor -1}<|\cF|\leq |\cF_1||\cF_2|\leq 2^a|\cF_2|
\]
and we therefore have $|\cF_2|> 2^{\lfloor n/p\rfloor -1 -a}=2^{\lfloor
(n-ap)/p\rfloor -1}=2^{\lfloor n_2/p\rfloor -1}$. Since $\cF_2$ is $(p+1)$-wise
$p$-divisible by \Cref{lem:C1+C2}, the induction hypothesis also applies to
$\cF_2$ and we have
that $\cF_2$ is also a subfamily of an atomic family with atoms of size $p$,
which proves the theorem. 
\end{proof}

It remains to prove the technical \Cref{lem: improved Odlyzo}.

\begin{proof}[Proof of \Cref{lem: improved Odlyzo}]
    As previously mentioned, after Gaussian elimination and a permutation of
coordinates, we obtain a matrix $G=[I_r | A]$ whose rows form a basis of $V$,
where $r=\operatorname{dim}V$ and $I_r$ is the $r\times r$ identity matrix. Let
$v_1,\ldots,v_r$ be the rows of $G$. 
 Note that by \Cref{lem:p+1}, we have $r\geq 4$ since $\operatorname{dim}\operatorname{St}(V^{\spn{3}})=1$. 

The vectors of $V$ are in one-to-one correspondence with linear combinations of the rows of $G$.
For $\blambda=(\lambda_1,\ldots,\lambda_r)\in\F_p^r$, let us write $v(\blambda)=\lambda_1v_1+\cdots
+\lambda_rv_r$.
Let us define
\[
\Lambda =\{\blambda\in\F_p^r:\; v(\blambda)\in V\cap\{0,1\}^{n}\}.
\]
We have already remarked that $v(\blambda)(j)\in\{0,1\}$ for $j=1\ldots r,$
implies that $\Lambda\subset\{0,1\}^r$. To further constrain $\Lambda$ we shall
focus on the remaining coordinates of $v(\blambda)$, namely $v(\blambda)(j), j>r$.

We shall be making repeated use of the following observation.

    \medskip

    \noindent\textbf{Observation}. Let $I\subset [r]$ and denote
$\overline{I}=[r]\setminus I$. Consider the subset of
$\blambda=(\lambda_1,\ldots,\lambda_r)\in\Lambda$ for
which the values of $\lambda_i$, $i\in\overline{I}$, are fixed to some quantity. Then, the possible
values of $(\lambda_{i})_{i\in I}$  satisfy
\[
\sum_{i\in I}\lambda_iv_i(j) \in\{k,k+1\}
\]
where $k=-\sum_{i\in\overline{I}}\lambda_iv_i(j)$. 

This is simply stating that
$v(\blambda)(j)=\sum_{i\in I}\lambda_iv_i(j) + \sum_{i\in\overline{I}}\lambda_iv_i(j)
\in\{0,1\}$.

Let us illustrate the usefulness of this observation with a simple example:
suppose column $j$ of the matrix $G$ has (at least) two non-zero elements
$v_s(j)$ and $v_t(j)$ in rows $s$ and $t$. 
Then, letting $(\lambda_s,\lambda_t)$ span $\{0,1\}^2$, we have that
$\lambda_sv_s(j)+\lambda_tv_t(j)$ must span two distinct non-zero values 
$(\bmod\; p)$, $p\geq 3$, which together with the $0$ element give at least three
values. The observation tells us therefore that for any fixed
$(\lambda_i)_{i\notin \{s,t\}}$, at most three values of $(\lambda_s,\lambda_t)$
are allowed that will yield $(\lambda_1,\ldots,\lambda_r)\in\Lambda$. We
therefore must have $|\Lambda|\leq\frac 342^r.$

To bring down $|\Lambda|$ to $\frac 122^r$, we will need to consider several
columns of $G$ simultaneously. We will work with a set of $3$ columns evaluated
on a common set $I$ of $4$ coordinates. We will use the hypothesis
$\dim\St(V^{\spn{3}})=1$ to ensure that the relevant submatrix of $G$ exists.
We divide the proof into four steps. The first step ensures that we may
suppose all entries of $G$ to be in $\{0,1,-1\}$. The second step tells us that we
may assume that no $2\times 2$ submatrix of $G$ has only non-zero entries with
one distinct from the three others. The third step exhibits the existence of a
$4\times 3$ submatrix of $G$ with the required properties. The fourth and final
step applies the observation to the columns of this submatrix to prove that
$|\Lambda|\leq 2^{r-1}$.

    \medskip

    \noindent\textbf{Claim 1}. {\em All entries of $G$ are in $\{-1,0,1\}$ or
else
$|\Lambda|\leq 2^{r-1}$.}

For every fixed choice of
$(\lambda_1,\ldots,\lambda_{i-1},\lambda_i,\ldots,\lambda_r)$, by the
\textbf{Observation} above, we have $\lambda_iv_i(j)\in\{k,k+1\}$ for some integer
$k \bmod p$. 
 On the other hand, since
$\lambda_i\in\{0,1\}$, we have $\lambda_iv_i(j)\in\{0,v_i(j)\}$. Thus, if $v_i(j)\notin\{-1,0,1\}$, 
then the two values $\lambda_i=0$ and $\lambda_i=1$ cannot evaluate at two
consecutive integers $k,k+1$ mod $p$, for any $k$. Therefore, at most one value of
$\lambda_i$ is allowable for every choice of
$(\lambda_1,\ldots,\lambda_{i-1},\lambda_{i+1},\ldots,\lambda_r)$, and we have
$|\Lambda|\leq \frac 12 2^r=2^{r-1}$.

\medskip

    From now on, we therefore assume that $v_i(j)\in\{-1,0,1\}$ for all $i,j$.

    \medskip

    \noindent\textbf{Claim 2}. {\em If there exist two distinct $i,j$ such that
$v_i(s)\neq v_j(s)$ and $v_i(t)=v_j(t)$ for two distinct
$s,t\in\supp(v_iv_j)$, then $|\Lambda|\leq 2^{r-1}$.}

    Indeed, by fixing all $\lambda_h$ for $h\neq i,j$ and considering the $s$-th and $r$-th columns of $G$, the \textbf{Observation} gives us
    \begin{align*}
    \left\{
    \begin{array}{cc}
        \lambda_i-\lambda_j\in\{k_1,k_1+1\}, \\
        \lambda_i+\lambda_j\in\{k_2,k_2+1\},
    \end{array}
    \right.
    \end{align*}
    for $k_1,k_2\in\Z/p\Z$. Since $\lambda_i,\lambda_j\in\{0,1\}$, if
$\lambda_i+\lambda_j$ can only be equal to two consecutive integers mod $p$, then $(\lambda_i,\lambda_j)$ 
can only take the three values
\[
(0,0),(1,0),(0,1)\quad\text{or}\quad (1,0),(0,1),(1,1).
\]
 These three choices give three consecutive values of $\lambda_i-\lambda_j$.
Hence, the pair $(\lambda_i,\lambda_j)$ can take at most two values, which
implies $|\Lambda|\leq \frac 242^r=2^{r-1}.$

\medskip

    Let $\one_{\operatorname{supp}(v_iv_j)}$ be the characteristic vector of
$\supp(v_iv_j)$. By Claim 2, from now on we may assume that
    \begin{equation}\label{eq:vivj}
    v_iv_j=\pm\one_{\operatorname{supp}(v_iv_j)},\quad\forall i\neq j.
    \end{equation}

    \medskip

    \noindent\textbf{Claim 3}. {\em There exist $1\leq i_1,i_2,i_3,i_4\leq r$ such that
    \begin{itemize}
    \item $v_{i_1}*v_{i_2}*v_{i_3}*v_{i_4}=v_{i_1} v_{i_2} v_{i_3} v_{i_4}\neq 0$,
    \item $\supp(v_{i_1} v_{i_2} v_{i_3} v_{i_4})\subsetneq\supp(v_{i_1} v_{i_2}
v_{i_3})\subsetneq\supp(v_{i_1} v_{i_2})\subsetneq\supp(v_{i_1})$.
    \end{itemize}}

    \begin{figure}[H]
    \begin{center}
    \begin{tikzpicture}[scale=0.8]
    \draw[thick] (0,4) -- (4,4); 
    \draw[thick] (0,3) -- (3,3); \draw[thick, dashed, color=blue] (4,3) -- (6,3);
    \draw[thick] (0,2) -- (2,2); \draw[thick, dashed, color=blue] (3,2) -- (6,2);
    \draw[thick] (0,1) -- (1,1); \draw[thick, dashed, color=blue] (2,1) -- (6,1);
    \node at (-0.5,4) {$v_{i_1}$};
    \node at (-0.5,3) {$v_{i_2}$};
    \node at (-0.5,2) {$v_{i_3}$};
    \node at (-0.5,1) {$v_{i_4}$};
    \end{tikzpicture}
    \end{center}
    \caption{The four vectors of claim 3}
    \label{Four vectors in Step 5}
    \end{figure}
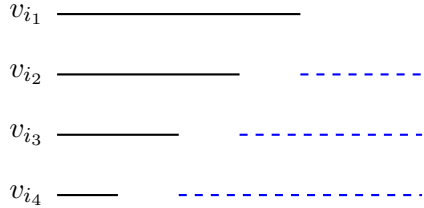

    \begin{proof}[Proof of the claim]
        We will determine these four vectors sequentially.

        \medskip

Recall that the hypothesis $\dim\St(V^{\spn{3}})=1$ means that the only non-zero
stabiliser of $V^{\spn{3}}$ is the all-one vector.

\noindent\textbf{Step 3.1}. Let $v_{i_1}$ be any row of $G$ and let
$S=\supp(v_{i_1})$. Suppose that there
does not exist $i_2$ such that
$\varnothing\subsetneq\supp(v_{i_1}v_{i_2})\subsetneq S$. Then, for every
$i$, either $v_iv_{i_1}=0$, or $\supp(v_iv_{i_1})=S$, in which case
\eqref{eq:vivj} implies $v_iv_{i_1}=\pm\one_S$, or equivalently, $v_{i}\one_S=\pm
v_{i_1}$. But this means that $\one_S$ is a non-trivial stabiliser of $V$, and
therefore also of $V^{\spn{3}}$, a contradiction.

        \medskip

\noindent\textbf{Step 3.2}. Let $i_1,i_2$ be such that
$\varnothing\subsetneq\supp(v_{i_1}v_{i_2})\subsetneq \supp(v_{i_1})$ and let
$S=\supp(v_{i_1}v_{i_2}).$ Suppose there does not exist $i_3$ such that 
 $\varnothing\subsetneq\supp(v_{i_1}v_{i_2}v_{i_3})\subsetneq S$. Then, for every
$i$ we have either $v_i\one_S=0$, or $\supp(v_i)\supset S$. Therefore, for any
$i,j$, we have either $v_{i}v_{j}\one_S=0$, or $\supp(v_iv_j)\supset S$. But in this
last case, \eqref{eq:vivj} implies that $v_iv_j\one_S=\pm\one_S=\pm
v_{i_1}v_{i_2}$. Therefore, for every $i,j$ we have $v_iv_j\one_S\in
V^{\spn{2}}$, which means that $\one_S$ stabilises $V^{\spn{2}}$ and hence also
$V^{\spn{3}}$, a contradiction.

        \medskip

\noindent\textbf{Step 3.3}. Let $i_1,i_2,i_3$ be such that
$\varnothing\subsetneq\supp(v_{i_1}v_{i_2}v_{i_3})\subsetneq
\supp(v_{i_1}v_{i_2})\subsetneq\supp(v_{i_1})$ and let
$S=\supp(v_{i_1}v_{i_2}v_{i_3}).$ Suppose there does not exist $i_4$ such that 
$\varnothing\subsetneq\supp(v_{i_1}v_{i_2}v_{i_3}v_{i_4})\subsetneq S$.
Then, for every
$i$ we have either $v_i\one_S=0$, or $\supp(v_i)\supset S$.  Therefore, for any
$i,j,k$, we have either $v_iv_jv_k\one_S=0$, or $\supp(v_iv_jv_k)\supset S$. But
in this last case \eqref{eq:vivj} implies that
$v_iv_jv_k\one_S=\pm v_{i_1}v_{i_2}v_{i_3}$ which means that $\one_S$ is a
non-trivial stabiliser of $V^{\spn{3}}$, a contradiction.
\end{proof}

    \noindent\textbf{Final Step}. For ease of notation, let
us write the indices of the four vectors of Claim 3 as
$i_1=1,i_2=2,i_3=3$ and $i_4=4$. For each choice of
$(\lambda_5,\lambda_6,\ldots,\lambda_r)$, claims 1,2,3 imply that there exist $\delta_2,\delta_3,\delta_4\in\{1,-1\}$ such that either
    \begin{align}\label{eq1:lem:improved Odlyzo}
        \left\{
        \begin{array}{cc}
        \lambda_1+\delta_2\lambda_2+\delta_3\lambda_3+\delta_4\lambda_4\in\{k_1,k_1+1\}\\
        \lambda_1+\delta_2\lambda_2+\delta_3\lambda_3\in\{k_2,k_2+1\}\\
        \lambda_1+\delta_2\lambda_2\in\{k_3,k_3+1\}
        \end{array}
        \right.
    \end{align}
    for $k_1,k_2,k_3\in\Z/p\Z$, or
    \begin{align}\label{eq2:lem:improved Odlyzo}
        \left\{
        \begin{array}{cc}
        \lambda_1+\delta_2\lambda_2+\delta_3\lambda_3+\delta_4\lambda_4\in\{k_1',k_1'+1\}\\
        \lambda_1+\delta_2\lambda_2+\delta_3\lambda_3\in\{k_2',k_2'+1\}\\
        \lambda_1+\delta_2\lambda_2+\delta_4\lambda_4\in\{k_3',k_3'+1\}
        \end{array}
        \right.
    \end{align}
    for $k_1',k_2',k_3'\in\Z/p\Z$. We aim to determine the maximal number of
solutions $(\lambda_1,\lambda_2,\lambda_3,\lambda_4)$ for these systems. We
begin by examining the first constraint in the first case, which states that: 
    \[
    \lambda_1+\delta_2\lambda_2+\delta_3\lambda_3\in\{k_1,k_1+1\}
    \]
    when $\lambda_4=0$ and
    \[
    \lambda_1+\delta_2\lambda_2+\delta_3\lambda_3\in\{(k_1-\delta_4),(k_1-\delta_4)+1\}
    \]
 when
$\lambda_4=1$. In other words, $\lambda_1+\delta_2\lambda_2+\delta_3\lambda_3$
must belong to {\em different} pairs of consecutive integers $\bmod\, p$ for
$\lambda_4=0$ and for $\lambda_4=1$. But
the second constraint states that
$\lambda_1+\delta_2\lambda_3+\delta_3\lambda_3$ can only belong to a {\em
constant} pair $(k_2,k_2+1)$ of
consecutive integers $\bmod\, p$. We therefore have that the maximum number of solutions
for $(\lambda_1,\lambda_2,\lambda_3,\lambda_4)$ is obtained when
    \begin{align}\label{eq3:lem:improved Odlyzo}
        \left\{
        \begin{array}{cc}
        \lambda_1+\delta_2\lambda_2+\delta_3\lambda_3\in\{k_2,k_2+1\}\\
        \lambda_1+\delta_2\lambda_2\in\{k_3,k_3+1\}
        \end{array}
        \right.
    \end{align}
    for one choice of $\lambda_4$, and
        \begin{align}\label{eq4:lem:improved Odlyzo}
        \left\{
        \begin{array}{cc}
        \lambda_1+\delta_2\lambda_2+\delta_3\lambda_3\quad\text{ is a fixed
integer $\bmod\, p$}\\
        \lambda_1+\delta_2\lambda_2\in\{k_3,k_3+1\}
        \end{array}
        \right.
    \end{align}
    for the other choice of $\lambda_4$. The system (\ref{eq4:lem:improved
Odlyzo}) has at most $3$ solutions for $(\lambda_1,\lambda_2,\lambda_3)$, as
$\lambda_1+\delta_2\lambda_2\in\{k_3,k_3+1\}$ has at most $3$ solutions for
$(\lambda_1,\lambda_2)$ and there is at most one allowable value of $\lambda_3$
for every $(\lambda_1,\lambda_2)$. Let us now derive an upper bound on
the number of solutions for (\ref{eq3:lem:improved Odlyzo}). Using
essentially the same argument as above, for one choice of $\lambda_3$,
    \[
    \lambda_1+\delta_2\lambda_2\quad\text{is a fixed integer $\bmod\, p$}
    \]
    and so $(\lambda_1,\lambda_2)$ has at most $2$ solutions; and for the other choice of $\lambda_3$,
    \[
    \lambda_1+\delta_2\lambda_2\in\{k_3,k_3+1\}
    \]
    and so $(\lambda_1,\lambda_2)$ has at most $3$ solutions, giving at most $5$
solutions for \eqref{eq3:lem:improved Odlyzo}. 
In total, the system (\ref{eq1:lem:improved Odlyzo}) has therefore at most $3+5=8$
solutions for $(\lambda_1,\lambda_2,\lambda_3,\lambda_4)$. An essentially
identical argument gives the same result for the system (\ref{eq2:lem:improved Odlyzo}). Thus, 
$|\Lambda|\leq 8\cdot 2^{r-4}=2^{r-1}$ which proves the lemma.
\end{proof}

\section{$\ell$-divisible set families for composite $\ell$}
\label{section:Establishing the atomicity of k-closed families}

This section is devoted to proving \Cref{MainThmGeneral}, the structure theorem for $\ell$-divisible set families.

When $\ell$ is a arbitrary integer, we encounter two difficulties compared to the
prime case. One is that when considering the vector space $V$ generated by $\cF$
over $\F_p$, for a prime $p$, then \Cref{codes k closed} cannot tell us whether
the vectors of $V\cap\{0,1\}^n$ have a support of cardinality divisible by a
power $p^\alpha$ of $p$. Switching to the vector space over $\F_{p^\alpha}$ does
not help. To deal with this problem, we borrow an idea from
\cite{Gishboliner2022Small} which is captured by \Cref{lem:prime power} below.
The second issue is that when we define the vector space $V$ generated by $\cF$
over $\F_p$, for $p$ a prime divisor of $\ell$, and
 we try to argue that $V^{\spn{k}}$ breaks into a direct
sum of spaces, we may get different decompositions for different prime
divisors of $\ell$ and reconciling them may not be obvious.

To address this second difficulty, we adopt a strategy also present in
\cite{Gishboliner2022Small}. We will study the {\em atoms} of $\cF$ (called
maximal sets of twins in \cite{Gishboliner2022Small}), namely the
maximal subsets of $[n]$ on which the functions of $\cF$ are constant.
The strategy is to prove the existence of an atom of cardinality divisible by
$\ell$ and apply an induction argument.

We first make some statements about the atoms of a family $\cF$.
Recall that we regularly identify $\cF\in 2^{[n]}$ with $\cF\subset \{0,1\}^n$ and every $A\in\cF$ with its characteristic vector $\one_A\in\{0,1\}^n$.

\begin{defn}[atom]
    Let $\cF\subset 2^{[n]}$. An \textit{atom} of $\cF$ is a non-empty subset $A$ of $\supp(\cF)\subset[n]$ satisfying
\begin{enumerate}
\item[(i)] $\forall F\in\cF,\quad
\text{either }A\subset F\text{ or }F\cap A=\varnothing$,
\item[(ii)]  $\forall B\supsetneq A$, $\exists F\in\cF, \quad \varnothing\subsetneq F\cap
B\subsetneq B$.
\end{enumerate}
In words, an atom is a set $A$ satisfying (i) and maximal for inclusion with this property. 
\end{defn}

Clearly, the atoms of $\cF$ form a partition of $\supp(\cF)$ and the family $\cF$ is
included in the atomic family whose atoms are the atoms of $\cF$. In particular
$|\cF|\leq 2^a$ if $a$ is the number of atoms of $\cF$.

The following proposition is straightforward.

\begin{prop}\label{prop:Fr F same atoms}
    Let $\cF\subset 2^{[n]}$. Then for every $r\geq 1$, the set family $\cF^r$ has the same atoms as $\cF$.
\end{prop}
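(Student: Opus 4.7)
My plan is to prove both containments directly from the definitions. The key observation is that, viewed as a family of sets, $\cF^r$ is exactly $\{F_1 \cap \cdots \cap F_r : F_1, \ldots, F_r \in \cF\}$, since the coordinate-wise product of the characteristic vectors $\one_{F_1}, \ldots, \one_{F_r}$ equals $\one_{F_1 \cap \cdots \cap F_r}$. Taking $F_1 = \cdots = F_r = F$ shows $\cF \subseteq \cF^r$; together with the reverse inclusion of supports (any intersection is contained in its factors), this also gives $\supp(\cF^r) = \supp(\cF)$.

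For the forward direction, let $A$ be an atom of $\cF$. For any $G = F_1 \cap \cdots \cap F_r \in \cF^r$: either $A \subseteq F_i$ for every $i$, in which case $A \subseteq G$; or some $F_i$ is disjoint from $A$, in which case $G \cap A = \varnothing$. So $A$ satisfies property (i) for $\cF^r$. Property (ii) for $\cF^r$ is then immediate from $\cF \subseteq \cF^r$: any $B \supsetneq A$ is already ``split'' by some $F \in \cF \subseteq \cF^r$.

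For the reverse direction, let $A'$ be an atom of $\cF^r$. Property (i) for $\cF$ follows from $\cF \subseteq \cF^r$. The only delicate step---and what I'd call the main obstacle---is verifying (ii) for $\cF$: given $B \supsetneq A'$, the atomicity of $A'$ in $\cF^r$ only supplies some $G = F_1 \cap \cdots \cap F_r \in \cF^r$ with $\varnothing \subsetneq G \cap B \subsetneq B$, and I need to extract a single $F_i \in \cF$ with the same property. To do this, pick any $x \in B \setminus G$; then $x \notin F_i$ for some index $i$, which forces $F_i \cap B \subsetneq B$. On the other hand, $\varnothing \neq G \cap B \subseteq F_i \cap B$, so $F_i \cap B$ is non-empty. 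Hence this single $F_i \in \cF$ witnesses property (ii), completing the proof.
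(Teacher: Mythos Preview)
Your proof is correct. The paper actually omits the proof entirely, declaring the proposition ``straightforward''; your argument supplies exactly the kind of direct verification one would expect, using $\cF\subseteq\cF^r$ together with the observation that each $G\in\cF^r$ is an intersection of members of $\cF$, and your extraction of a single $F_i$ in the reverse direction is the right way to handle the only non-immediate step.
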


\begin{prop}\label{prop:cFa contains an atom of F}
    Let $\cF\subset 2^{[n]}$ and let $a$ be the number of atoms of $\cF$. Then $\cF^a$ contains an atom of $\cF$.
\end{prop}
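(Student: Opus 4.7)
The plan is to build, by a greedy intersection procedure, a sequence $F_1, F_2, \ldots, F_k \in \cF$ with $k \leq a$ whose coordinate-wise product (equivalently, set intersection) is an atom of $\cF$; then padding the sequence with extra copies of $F_1$ leaves the product unchanged and produces an element of $\cF^a$.

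The first ingredient is a separation lemma for atoms: for any two distinct atoms $A \neq A'$, there exists $F \in \cF$ such that either $A \subset F$ and $A' \cap F = \varnothing$, or $A' \subset F$ and $A \cap F = \varnothing$. To prove this I would invoke the maximality condition (ii) for $A$ with the strict superset $B = A \cup A' \supsetneq A$ (noting $A \cap A' = \varnothing$, since atoms are pairwise disjoint by (i)). This yields $F \in \cF$ with $\varnothing \subsetneq F \cap B \subsetneq B$. But (i) applied to the atoms $A$ and $A'$ forces $F \cap A \in \{\varnothing, A\}$ and $F \cap A' \in \{\varnothing, A'\}$, so $F \cap B$ lies in $\{\varnothing, A, A', A \cup A'\}$; the strict inclusions then force $F \cap B \in \{A, A'\}$, giving the claimed separation.

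The greedy step then runs as follows. Pick any non-empty $F_1 \in \cF$ and set $T_1 = F_1$, which by (i) is a union of atoms; let $\nu(T)$ denote the number of atoms of $\cF$ contained in a union-of-atoms $T$, so $\nu(T_1) \leq a$. Given $T_i = F_1 \cap \cdots \cap F_i$ with $\nu(T_i) \geq 2$, choose two distinct atoms $A, A' \subset T_i$ and pick $F_{i+1}$ from the separation lemma, say with $A \subset F_{i+1}$ and $A' \cap F_{i+1} = \varnothing$. Then $T_{i+1} := T_i \cap F_{i+1}$ is again a union of atoms (an intersection of unions of atoms is a union of atoms), is non-empty (contains $A$), and strictly misses $A'$, so $\nu(T_{i+1}) < \nu(T_i)$. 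The process terminates at some step $k \leq \nu(T_1) \leq a$ with $T_k$ a single atom, and repeating $F_1$ to pad the product to length $a$ gives this atom as an element of $\cF^a$.

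The main obstacle, such as it is, will be the separation lemma: condition (ii) of the atom definition has to be applied to the correct superset $B = A \cup A'$, and one needs the consequences of (i) that restrict $F \cap B$ to only four possibilities. Once the separation lemma is in hand, the strict decrease of $\nu$ at each greedy step, the termination bound $k \leq a$, and the padding argument are all essentially bookkeeping.
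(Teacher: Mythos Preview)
Your proof is correct, and it takes a genuinely different route from the paper's.

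The paper argues linear-algebraically: it fixes any field $\F$, picks $F_1,\ldots,F_k\in\cF$ whose characteristic vectors form a basis of the $\F$-span $V$ of $\cF$ (so $k=\dim V\leq a$), and then takes an inclusion-minimal non-empty member of the lattice $\{F_{i_1}\cap\cdots\cap F_{i_j}\}$. Minimality forces this set to be either contained in or disjoint from each $F_i$, and since the $\one_{F_i}$ span $V$, every vector in $V$ is constant on it; hence it is an atom, and it lies in $\cF^k\subset\cF^a$.

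Your argument is purely combinatorial: the separation lemma is extracted directly from axioms (i)--(ii) of the atom definition, and the greedy descent on the atom count $\nu$ replaces the appeal to a basis. What your approach buys is elementarity --- no auxiliary field or vector space is needed, and the role of the number $a$ is completely transparent. What the paper's approach buys is a slightly sharper intermediate bound ($k\leq\dim V$, which can be strictly smaller than $a$) and a tighter fit with the linear-algebraic machinery used in the rest of the section, though neither of these is actually exploited downstream.
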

\begin{proof}
    Let $\F$ be a field and let $V$ be the $\F$-vector space generated by $\cF$. Let $k\coloneqq\dim V$. Then $k\leq a$ and there exist $F_1,\ldots,F_k\in\cF$ such that $\one_{F_1},\ldots,\one_{F_k}$ is a basis of $V$. Consider an inclusion-minimal non-empty subset in $\{F_{i_1}\cap\cdots\cap F_{i_j}:1\leq i_1<\ldots<i_j\leq k\}$, which we denote by $A$. Then
    \[
    \text{either }A\subset F_i\text{ or }F_i\cap A=\varnothing,\quad\forall 1\leq i\leq k
    \]
    and $A$ is inclusion-maximal with this property. Since
$\one_{F_1},\ldots,\one_{F_k}$ is a basis of $V$, we have that any vector in $V$
is constant on $A$, therefore $A$ is an atom of $\cF$.
\end{proof}

\begin{lem}\label{lem:dim Ci=1}
    Let $r\in\Z_{>0}$ and $\F$ be a field. Let $\cF\subset 2^{[n]}$ and $V$ be the $\F$-vector space generated by $\cF$. Suppose
    \[
    V^{\spn{r}}=C_1\oplus\cdots\oplus C_m
    \]
    for an integer $m$ and some nonzero codes $C_i$. If $\dim C_i=1$, then $\supp(C_i)$ is an atom of $\cF$.
\end{lem}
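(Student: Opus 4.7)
My plan is to verify the two defining properties of an atom for $S:=\supp(C_i)$: that every $\one_F$, $F\in\cF$, is constant on $S$, and that $S$ is maximal with this property.

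The starting observation is that $V\subseteq V^{\spn{r}}$, since each generator $\one_F$ of $V$ is idempotent under the coordinatewise product and therefore equals $\one_F^r\in V^{\spn{r}}$. Consequently, every $\one_F$ admits a unique decomposition $\one_F=d_1+\cdots+d_m$ along $V^{\spn{r}}=C_1\oplus\cdots\oplus C_m$, where each $d_j$ is simply the restriction of $\one_F$ to $S_j:=\supp(C_j)$ extended by zero. In particular each $d_j$ has $\{0,1\}$-entries.

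To prove property (i), write $C_i=\F c$ with $\supp(c)=S$, so $d_i=\lambda_F c$ for some scalar $\lambda_F\in\F$ depending on $F$. Since $S\subseteq\supp(V^{\spn{r}})=\supp(\cF)$ is non-empty, at least one $F\in\cF$ meets $S$, so $\lambda_F\neq 0$ for some $F$. Combining that $\lambda_F c$ has entries in $\{0,1\}$ with the fact that $c$ has no zeros on $S$ forces $c$ to be a non-zero scalar multiple of $\one_S$. Once $c$ is proportional to $\one_S$, each $d_i=\lambda_F c$ is itself a scalar multiple of $\one_S$ whose entries lie in $\{0,1\}$, so $\one_F$ is constantly $0$ or constantly $1$ on $S$, which is property (i).

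The step I expect to require the most care is the maximality condition (ii). I would argue by contradiction: suppose some $B\supsetneq S$ also satisfies (i), and pick $x\in B\setminus S$ and $y\in S$. Since each $\one_F$ is constant on $B$, we have $\one_F(x)=\one_F(y)$ for every $F\in\cF$. By linearity this extends to $v(x)=v(y)$ for every $v\in V$. Because pointwise equality at two coordinates is preserved under coordinatewise products, the relation propagates to every $v\in V^{\spn{r}}$. Applying it to $c\in C_i\subseteq V^{\spn{r}}$ yields $c(x)=c(y)$, contradicting $x\notin\supp(c)=S\ni y$. Hence $S$ is maximal, and is therefore an atom of $\cF$.
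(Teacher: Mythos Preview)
Your proof is correct. It differs from the paper's argument in a pleasant way: the paper first observes that $\supp(C_i)$ is an atom of $\cF^r$ (since $V^{\spn{r}}$ is generated by $\cF^r$) and then invokes Proposition~\ref{prop:Fr F same atoms} to conclude that it is also an atom of $\cF$. You instead bypass Proposition~\ref{prop:Fr F same atoms} entirely by noting at the outset that $V\subseteq V^{\spn{r}}$, thanks to the idempotence of the characteristic vectors $\one_F$, and then verify the two atom axioms for $\cF$ directly. The paper's route is shorter because the bookkeeping has already been packaged into Proposition~\ref{prop:Fr F same atoms}; your route is self-contained and makes the maximality step particularly transparent, since the equality $v(x)=v(y)$ propagates from $V$ to $V^{\spn{r}}$ under coordinatewise products and immediately contradicts $c(x)=0\neq c(y)$. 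Both arguments implicitly use that the summands $C_j$ have pairwise disjoint supports, which is the paper's standing convention for such direct-sum decompositions.
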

\begin{proof}
    Note that $V^{\spn{r}}$ is generated by $\cF^r$. Since $\dim C_i=1$, we have that $\supp(C_i)$ is an atom of $\cF^r$ and thus the result follows from \Cref{prop:Fr F same atoms}.
\end{proof}

The following two lemmas will be useful for induction arguments. The next lemma
is straightforward.

\begin{lem}\label{lem:k wise ell divisible outside atom}
    Let $k,\ell\in\Z_{>0}$ and let $\cF\subset 2^{[n]}$ be $k$-wise $\ell$-divisible. Let $A$ be an atom of $\cF$ having cardinality divisible by $\ell$. Then $\cF|_{[n]\backslash A}$ is $k$-wise $\ell$-divisible.
\end{lem}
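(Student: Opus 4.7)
The plan is to unpack the statement directly from the definitions. A typical intersection arising in $\cF|_{[n]\setminus A}$ has the form $(F_1\cap\cdots\cap F_k)\setminus A$ for some choice of $F_1,\ldots,F_k\in\cF$ (not necessarily distinct). To show its cardinality is divisible by $\ell$, I would invoke property (i) of the definition of an atom: for each $i$, either $A\subset F_i$ or $A\cap F_i=\varnothing$. This gives a clean dichotomy on the tuple $(F_1,\ldots,F_k)$.

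In the first case, if $A\cap F_i=\varnothing$ for at least one $i$, then $A\cap F_1\cap\cdots\cap F_k=\varnothing$, so that removing $A$ does nothing: $(F_1\cap\cdots\cap F_k)\setminus A=F_1\cap\cdots\cap F_k$, whose size is divisible by $\ell$ by the $k$-wise $\ell$-divisibility of $\cF$.

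In the complementary case, if $A\subset F_i$ for every $i$, then $A\subset F_1\cap\cdots\cap F_k$, so
\[
|(F_1\cap\cdots\cap F_k)\setminus A|=|F_1\cap\cdots\cap F_k|-|A|.
\]
Both terms on the right are divisible by $\ell$: the first by hypothesis on $\cF$, the second by the assumption that $\ell$ divides $|A|$. Hence the difference is divisible by $\ell$ as well.

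There is no genuine obstacle; the only mild point to keep track of is that the $F_i$ need not be distinct, but the atom property applies uniformly regardless, and the proof needs no further input than (i) in the definition of an atom together with the two divisibility hypotheses. This matches the paper's characterisation of the lemma as straightforward.
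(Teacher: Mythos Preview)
Your proof is correct and is exactly the straightforward verification the paper has in mind; the paper does not spell out a proof for this lemma, simply labeling it as straightforward, and your two-case argument via property~(i) of an atom is the intended one.
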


\begin{lem}\label{lem:MainThmGeneral large k}
    Let $k,\ell\in\Z_{>0}$. Let $\cF\subset 2^{[n]}$ be a full-support $k$-wise
$\ell$-divisible set family. Suppose $k\geq n$. Then $\cF$ has an atom of
size divisible by $\ell$.
\end{lem}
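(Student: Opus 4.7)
The plan is to reduce the statement directly to \Cref{prop:cFa contains an atom of F}, using only the trivial observation that any $r$-fold intersection of sets from $\cF$ with $r \leq k$ can be rewritten as a $k$-fold intersection by repeating one of the constituent sets.

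First I would let $a$ denote the number of atoms of $\cF$. Since the atoms partition $\supp(\cF)$ and $\cF$ is full-support, we have $a \leq n$, and the hypothesis $k \geq n$ therefore yields $a \leq k$. Applying \Cref{prop:cFa contains an atom of F}, some atom $A$ of $\cF$ lies in $\cF^a$, i.e.\ there exist (not necessarily distinct) $F_1,\ldots,F_a \in \cF$ with $A = F_1 \cap \cdots \cap F_a$. Because the coordinate-wise product is idempotent on $\{0,1\}^n$, I can pad this expression with $k-a$ additional copies of $F_1$ to obtain
\[
A \;=\; \underbrace{F_1 \cap \cdots \cap F_1}_{k-a+1 \text{ copies}} \cap\, F_2 \cap \cdots \cap F_a,
\]
which exhibits $A$ as a $k$-fold intersection of elements of $\cF$. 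The $k$-wise $\ell$-divisibility hypothesis then immediately forces $\ell \mid |A|$, and $A$ is the desired atom.

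I do not foresee any real obstacle: the substantive content is already packaged inside \Cref{prop:cFa contains an atom of F}, and the only subtlety is to recall that the definition of $k$-wise $\ell$-divisibility in this paper explicitly allows the $k$ sets being intersected to repeat, so the padding step is legitimate. If one wanted the statement only for pairwise distinct tuples, a slightly more careful argument would be needed, but that is not the setting here.
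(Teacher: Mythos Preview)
Your proposal is correct and is essentially identical to the paper's proof: the paper also bounds $a\leq n\leq k$, invokes \Cref{prop:cFa contains an atom of F} to find an atom $A\in\cF^a\subset\cF^k$, and concludes $\ell\mid |A|$ from $k$-wise $\ell$-divisibility. The only cosmetic difference is that the paper writes the padding step as the inclusion $\cF^a\subset\cF^k$ rather than spelling out the repeated copies of $F_1$.
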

\begin{proof}
Let $a$ be the number of atoms of $\cF$. We have $k\geq n\geq a$ and therefore
$\cF^k\supset\cF^a$ contains an atom $A$ of $\cF$ by \Cref{prop:cFa contains an
atom of F}. Elements of $\cF^k$ have cardinality divisible by $\ell$, by
definition of $k$-wise $\ell$-divisibility, therefore $|A|$ is divisible by
$\ell$.
\end{proof}

The next lemma enables us to deal with the case when $\ell$ is a prime power.

\begin{lem}\label{lem:prime power}
    Let $\alpha,k\in\Z_{>0}$ and $p$ be a prime integer. Let $\cF\subset 2^{[n]}$ be $k\phi(p^\alpha)$-wise $p^\alpha$-divisible, where $\phi$ is the Euler's totient function, 
    and let $V$ be the $\F_p$-vector space generated by $\cF$. Let $v\in V^{\spn{k}}\cap\{0,1\}^n$ and let $S=\supp(v)$. Then $|S|$ is divisible by $p^\alpha$.
\end{lem}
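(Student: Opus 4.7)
The plan is to detect divisibility of $|S|$ by $p^\alpha$ through an Euler-type congruence that bridges the $\F_p$-level description of $v$ with arithmetic modulo $p^\alpha$. Since $V^{\spn{k}}$ is spanned by products $\one_{F_1}*\cdots*\one_{F_k}=\one_{F_1\cap\cdots\cap F_k}$ with $F_i\in\cF$, I would first write
\[
v=\sum_{j=1}^{N} c_j\,\one_{B_j}\quad\text{in }\F_p^n,
\]
where $c_j\in\F_p$ and each $B_j=F_{j,1}\cap\cdots\cap F_{j,k}$ is a $k$-fold intersection of members of $\cF$. Choosing integer lifts $\tilde c_j\in\Z$ of the $c_j$, I set $y_i:=\sum_j \tilde c_j\,\one_{B_j}(i)\in\Z$, so that $y_i\equiv v(i)\pmod{p}$ for every $i\in[n]$.

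Next I would invoke the classical congruences $y^{\phi(p^\alpha)}\equiv 1\pmod{p^\alpha}$ when $\gcd(y,p)=1$ (Euler) and $y^{\phi(p^\alpha)}\equiv 0\pmod{p^\alpha}$ when $p\mid y$ (since $\phi(p^\alpha)=p^{\alpha-1}(p-1)\geq\alpha$). Applied coordinate-wise, and using $v(i)\in\{0,1\}$, these yield $y_i^{\phi(p^\alpha)}\equiv v(i)\pmod{p^\alpha}$. Summing over $i$ gives
\[
|S|=\sum_i v(i)\equiv \sum_i y_i^{\phi(p^\alpha)}\pmod{p^\alpha}.
\]

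The concluding step is a multinomial expansion of $y_i^{\phi(p^\alpha)}$ followed by a swap of summations:
\[
\sum_i y_i^{\phi(p^\alpha)}=\sum_{(j_1,\ldots,j_{\phi(p^\alpha)})}\tilde c_{j_1}\cdots \tilde c_{j_{\phi(p^\alpha)}}\,\bigl|B_{j_1}\cap\cdots\cap B_{j_{\phi(p^\alpha)}}\bigr|.
\]
Each set $B_{j_1}\cap\cdots\cap B_{j_{\phi(p^\alpha)}}$ is the intersection of $k\,\phi(p^\alpha)$ members of $\cF$ (repetitions allowed), so its cardinality is divisible by $p^\alpha$ by hypothesis. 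Hence the right-hand sum vanishes modulo $p^\alpha$, and so does $|S|$.

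I do not foresee a genuine obstacle beyond recognising the correct exponent. The number $\phi(p^\alpha)$ is precisely the one that makes Euler's theorem pin down units modulo $p^\alpha$ while simultaneously being large enough for any single factor of $p$ to be absorbed into a multiple of $p^\alpha$; it is exactly this calibration that dictates the inflation of the $k$-wise hypothesis to $k\phi(p^\alpha)$-wise, upgrading a mod-$p$ statement about $V^{\spn{k}}$ into a mod-$p^\alpha$ statement about set sizes.
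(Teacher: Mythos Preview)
Your proof is correct and is essentially identical to the paper's own argument: lift an $\F_p$-expression for $v$ to an integer vector $w$ (your $(y_i)_i$), apply Euler's theorem together with $\phi(p^\alpha)\geq\alpha$ to get $w(i)^{\phi(p^\alpha)}\equiv v(i)\pmod{p^\alpha}$, and observe that $\sum_i w(i)^{\phi(p^\alpha)}$ is a $\Z$-linear combination of cardinalities of $k\phi(p^\alpha)$-fold intersections, hence divisible by $p^\alpha$. The only cosmetic difference is that you spell out the multinomial expansion explicitly where the paper simply remarks that $w^{\phi(p^\alpha)}$ lies in the $\Z$-span of $\cF^{k\phi(p^\alpha)}$.
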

\begin{proof}
    Since $v\in V^{\spn{k}}$, there exist $\lambda_1,\ldots,\lambda_f\in\Z$ and $v_1,\ldots,v_f\in\cF^k$ such that
    \[
    v\equiv \lambda_1v_1+\cdots+\lambda_fv_f\ (\text{mod }p).
    \]
    We define $w\in\Z^n$ such that
    \begin{equation}\label{eq:w}
    w = \lambda_1v_1+\lambda_2v_2+\cdots+ \lambda_fv_f.
    \end{equation}
    From \eqref{eq:w} we have that $w^{\phi(p^\alpha)}$ is a linear combination of some elements of $\cF^{k\phi(p^\alpha)}$ over $\Z$. As $\cF$ is $k\phi(p^\alpha)$-wise $p^\alpha$-divisible, all elements of $\cF^{k\phi(p^\alpha)}$ have the sum of their coordinates divisible by $p^\alpha$, and so do their linear combinations. Therefore,
    \begin{equation}\label{eq:palpha}
    \sum_{i=1}^nw(i)^{\phi(p^\alpha)} \equiv0\ ( \bmod\ p^\alpha).
    \end{equation}
    Since $v\in\{0,1\}^n$, we have that $w(i)\equiv 1\ (\text{mod }p)$ if $i\in S$ and $w(i)\equiv 0\ (\text{mod }p)$ if $i\notin S$: and since $\phi(p^\alpha)\geq\alpha$, we have $p^\alpha\ |\ p^{\phi(p^\alpha)} |\ w(i)^{\phi(p^\alpha)}$ for $i\notin S$. 
    By Fermat-Euler's theorem, we have $w(i)^{\phi(p^\alpha)}\equiv 1 \ (\text{mod }p^\alpha)$ for $i\in S$. 
    Summing, we therefore obtain
    \[
    \sum_{i=1}^nw(i)^{\phi(p^\alpha)} \equiv |S|\ ( \bmod\ p^\alpha)
    \]
    which together with \eqref{eq:palpha} gives us that $|S|\equiv 0\ (  \bmod\ p^\alpha)$.
\end{proof}

Let $\ell=p_1^{\alpha_1}\cdots p_h^{\alpha_h}$ be the prime factorization of the
integer
$\ell$, and let $\cF$ be a $k$-wise $\ell$-divisible family for a sufficiently
large $k$.
 The next two lemmas will allow us to introduce subsets $S_i$ of $[n]$ whose complement is
a union of atoms of $\cF$ of size divisible by $p_i^{\alpha_i}$. The core of the proof of
\Cref{MainThmGeneral} will consist of showing that the union of the $S_i$ is
not the whole set $[n]$, so that there must exist an atom of $\cF$ of
cardinality divisible by $p_i^{\alpha_i}$ for every $i=1\ldots h$.

\begin{lem}\label{lem:1/2}
    Let $t\in \Z_{>0}$. Let $V$ be a subspace of $\F_p^n$. Let $m=\dim\operatorname{St}(V^{\spn{t}})$, and let
\[
V^{\spn{t}}=C_1\oplus C_2\oplus\cdots\oplus C_m
\]
be the corresponding decomposition of $V^{\spn{t}}$ given in \Cref{StabiliserDecomposition}. 
Let $I=\{i:1\leq i\leq m,\ \dim C_i\geq 2\}$ and let $S=\bigcup_{i\in I}\supp(C_i)$.
Let $W=V|_S$.
Then,
\[
\dim V^{\spn{r}}\geq \dim V^{\spn{r-1}} +\frac{1}{2} \dim W
\]
for all $r$ with $2\leq r\leq t$.
\end{lem}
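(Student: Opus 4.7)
The plan is to apply Kneser's theorem (\Cref{Kneser}) to the factorisation $V^{\spn{r}} = V \cdot V^{\spn{r-1}}$, which yields
\[
\dim V^{\spn{r}} \geq \dim V^{\spn{r-1}} + \dim V - \dim \St(V^{\spn{r}}).
\]
The proof then reduces to establishing the arithmetic inequality $\dim V - \dim \St(V^{\spn{r}}) \geq \tfrac{1}{2}\dim W$.

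A first useful observation is the containment $\St(V^{\spn{r}}) \subseteq \St(V^{\spn{t}})$ for every $r \leq t$: if $x\in\St(V^{\spn{r}})$, then $xV^{\spn{t}} = xV^{\spn{r}}V^{\spn{t-r}} \subseteq V^{\spn{r}}V^{\spn{t-r}} = V^{\spn{t}}$. Combined with \Cref{StabiliserDecomposition}, this shows $\St(V^{\spn{r}}) \subseteq \bigoplus_{i=1}^{m}\F_p\one_{\supp(C_i)}$, and since $\St(V^{\spn{r}})$ is itself a subalgebra of $\F_p^n$, it corresponds to a partition $\mathcal{P}_r$ of $[n]$ that coarsens $\{\supp(C_i)\}_{i=1}^{m}$. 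A second ingredient is that for each $i\in I$, the restriction $V_i := V|_{\supp(C_i)}$ satisfies $V_i^{\spn{t}} = C_i$, and because $C_i$ is indecomposable, $\St(V_i^{\spn{t}})$ is one-dimensional; \Cref{lem:kneserk} therefore forces $\dim V_i \geq 2$ for every $i \in I$.

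The arithmetic step is the main obstacle. The plan is to work block-by-block along the partition $\mathcal{P}_r$: for each block $P\in\mathcal{P}_r$, apply Kneser locally to $V|_P\cdot V^{\spn{r-1}}|_P$ inside $\F_p^{P}$, exploit the bounds $\dim V_i\geq 2$ for those $i\in I$ with $\supp(C_i)\subseteq P$, and sum the contributions across blocks. The factor $\tfrac{1}{2}$ should emerge because each one-dimensional increment of $\St(V^{\spn{r}})$ living on $S$ corresponds to merging parts of $W$ whose combined dimension is at least two. The hardest technical step will be dealing with blocks of $\mathcal{P}_r$ that straddle $S$ and $T := [n]\setminus S$, where the local Kneser bound mixes contributions coming from the indecomposable codes $C_i$ with $i \in I$ and the atomic pieces with $i \notin I$; I anticipate that a careful inductive argument over the structure of $\mathcal{P}_r$, combined with the lower bound $\dim V_i \geq 2$, will yield the claimed $\tfrac{1}{2}\dim W$ factor.
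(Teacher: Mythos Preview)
Your opening reduction is flawed: the inequality $\dim V - \dim\St(V^{\spn{r}}) \ge \tfrac12\dim W$ that you set out to prove is in general \emph{false}, so a single global application of Kneser's theorem cannot yield the lemma. For a concrete counterexample with $r=t=2$ and $p\ge 5$, take $V\subset\F_p^{\,6}$ spanned by $w_1=(1,1,1,1,1,0)$, $w_2=(1,2,3,4,0,1)$ and $w_3=(0,0,0,0,1,1)$. One checks that $V^{\spn{2}}=C_1\oplus C_2\oplus C_3$ with $\supp(C_1)=\{1,2,3,4\}$, $\dim C_1=3$, and $C_2,C_3$ one-dimensional on coordinates $5$ and~$6$. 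Hence $S=\{1,2,3,4\}$, $\dim W=2$, $\dim V=3$, $\dim\St(V^{\spn{2}})=3$, and $\dim V-\dim\St(V^{\spn{2}})=0<1=\tfrac12\dim W$. The global Kneser bound here gives only $\dim V^{\spn{2}}\ge 3$, whereas the lemma asserts (correctly) $\dim V^{\spn{2}}\ge 4$.

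Your block-by-block pivot is the right idea, but it does not rescue the false global inequality; it proves the lemma \emph{directly}, and this is exactly the paper's argument. The paper works along the finest partition $\{\supp(C_i)\}_{i=1}^m$: setting $V_i=V|_{\supp(C_i)}$, one has $V_i^{\spn{t}}=C_i$ with trivial stabiliser, so Kneser gives $\dim V_i^{\spn{t}}\ge \dim V_i^{\spn{t-1}}+\dim V_i-1$. The single observation you are missing is that $\dim V_i\ge 2$ for $i\in I$ yields $\dim V_i-1\ge\tfrac12\dim V_i$. Summing over all $i$ and using $V^{\spn{t-1}}\subset\bigoplus_i V_i^{\spn{t-1}}$ and $W\subset\bigoplus_{i\in I}V_i$ gives the case $r=t$ in one line. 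For $r<t$ the paper repeats the same argument with the stabiliser decomposition of $V^{\spn{r}}$, obtaining $\tfrac12\dim W_r$ on the right, and then invokes precisely your containment $\St(V^{\spn{r}})\subset\St(V^{\spn{t}})$ to deduce $W_r\supset W$. No induction over $\mathcal{P}_r$ is needed, and the ``mixed block'' difficulty you anticipate never actually arises.
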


\begin{proof}
Let us first prove the result for $r=t$. For $i=1\ldots m$, let $V_i\coloneqq V|_{\supp(C_i)}$ be
the restriction of the vector space $V$ to the coordinates of $\supp(C_i)$. When
necessary, we allow
ourselves to identify $V_i$ with a subspace of $\F_p^n$ by padding the vectors
of $V_i$ with zeros outside $\supp(C_i)$. Notice that $V_i^{\spn{t}}=C_i$.
For $i\in I$ we have $\dim V_i\geq 2$, otherwise we would have $\dim V_i^{\spn{t}}=\dim C_i=1$, which 
contradicts the definition of $I$. By definition of the $C_i$'s we have that  $V_i^{\spn{t}}$ has
trivial stabiliser and by Kneser's Theorem \ref{Kneser} we have 
\begin{align*}
\dim V_i^{\spn{t}}&\geq \dim V_i^{\spn{t-1}} +\dim V_i -1\\
&\geq \dim V_i^{\spn{t-1}} +\frac{1}{2}\dim V_i
\end{align*}
for $i\in I$ and $\dim V_i^{\spn{t}}\geq \dim V_i^{\spn{t-1}}$ for $i\not\in
I$. Therefore,
\begin{align*}
\dim V^{\spn{t}} &= \sum_{i=1}^m \dim V_i^{\spn{t}}\\
                   &\geq \sum_{i\not\in I}\dim  V_i^{\spn{t-1}} +\sum_{i\in I}\dim
V_i^{\spn{t-1}} + \frac{1}{2}\sum_{i\in I}\dim V_i\\
&= \sum_{i=1}^m\dim V_i^{\spn{t-1}}+\frac{1}{2}\sum_{i\in I}\dim V_i\\
&\geq \dim V^{\spn{t-1}} + \frac{1}{2}\dim W
\end{align*}
with the last inequality coming from the fact that $V^{\spn{t-1}}\subset V_1^{\spn{t-1}}
\oplus \cdots \oplus  V_m^{\spn{t-1}}$ and $W\subset \bigoplus_{i\in I}V_i$.

For $r<t$ we have therefore 
\[
\dim V^{\spn{r}}\geq \dim V^{\spn{r-1}} +\frac{1}{2} \dim W_r
\]
where $W_r$ is defined according the decomposition of $V^{\spn{r}}$ induced by
the stabiliser of $V^{\spn{r}}$. However, we have that
$\operatorname{St}(V^{\spn{j}})\subset\operatorname{St}(V^{\spn{j+1}})$ for any $j$, so that
$\operatorname{St}(V^{\spn{r}})\subset\operatorname{St}(V^{\spn{t}})$. This implies that $W_r\supset W$,
hence the result.
\end{proof}

\begin{lem}\label{lem:tphi}
    Let $t,\alpha\in \Z_{>0}$ and let $p$ be a prime number. Let
$\cF\subset\{0,1\}^n$ be a full-support $k$-wise $\ell$-divisible family
for integers $k,\ell$ such that $p^\alpha | \ell$ and $k\geq t\phi(p^\alpha)$. Let $V$ be the $\F_p$-vector space generated by $\cF$ 
and let 
\[
V^{\spn{t}}=C_1\oplus C_2\oplus\cdots\oplus C_m
\]
be the StabiliserDecomposition of $V^{\spn{t}}$ given by \Cref{StabiliserDecomposition} 
for $m=\dim\operatorname{St}(V^{\spn{t}})$. 
Let $I=\{i:1\leq i\leq m,\ \dim C_i\geq 2\}$ and let $S=\bigcup_{i\in I}\supp(C_i)$.
Then, every $j\in [n]\setminus S$ belongs to an atom of $\cF$ of cardinality divisible by
$p^\alpha$.
\end{lem}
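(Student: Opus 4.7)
The plan is to show that each $j \in [n] \setminus S$ lies in the support $T_i := \supp(C_i)$ of a one-dimensional summand $C_i$ (with $i \notin I$), and then deduce that $|T_i|$ is divisible by $p^\alpha$ by arguing that $\one_{T_i} \in V^{\spn{t}}$ and invoking \Cref{lem:prime power}.

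First I would observe that since $\cF$ has full support, so does $V$, and hence also $V^{\spn{t}}$ (because each $\one_F \in V$ is idempotent under $*$, so that $V \subset V^{\spn{t}}$). By \Cref{StabiliserDecomposition} the sets $T_i := \supp(C_i)$ form a partition of $[n]$, so $[n] \setminus S = \bigcup_{i \notin I} T_i$. Pick $j \in [n] \setminus S$ and let $i$ be the unique index with $j \in T_i$, which satisfies $\dim C_i = 1$. By \Cref{lem:dim Ci=1}, $T_i$ is already an atom of $\cF$, so it only remains to bound $|T_i|$ modulo $p^\alpha$.

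The key step is to prove that $\one_{T_i} \in V^{\spn{t}}$. Since $V$ is spanned over $\F_p$ by the $\{0,1\}$-vectors $\one_F$ for $F \in \cF$, expanding products by multilinearity shows that $V^{\spn{t}}$ is spanned by $t$-fold products $\one_{F_1} * \cdots * \one_{F_t} = \one_{F_1 \cap \cdots \cap F_t}$, which are themselves $\{0,1\}$-vectors. By \Cref{StabiliserDecomposition}, $\one_{T_i}$ lies in $\St(V^{\spn{t}})$, and multiplication by $\one_{T_i}$ projects $V^{\spn{t}}$ onto $C_i$ because the other summands $C_j$ have supports disjoint from $T_i$. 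Thus $C_i$ is spanned by the $\{0,1\}$-vectors $\one_{(F_1 \cap \cdots \cap F_t) \cap T_i}$. Since $\dim C_i = 1$ and $\supp(C_i) = T_i$, at least one such spanning vector must be non-zero with support exactly $T_i$, and hence must equal $\one_{T_i}$. Therefore $\one_{T_i} \in V^{\spn{t}}$.

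To conclude, since $p^\alpha \mid \ell$ and $k \geq t\phi(p^\alpha)$, the family $\cF$ is $t\phi(p^\alpha)$-wise $p^\alpha$-divisible, so \Cref{lem:prime power} applies with its parameter $k$ set to $t$ and $v = \one_{T_i} \in V^{\spn{t}} \cap \{0,1\}^n$, yielding $|T_i| \equiv 0 \pmod{p^\alpha}$. The main obstacle I anticipate is precisely the key step: showing that $\one_{T_i}$ itself, rather than some arbitrary generator of $C_i$ (which could have non-binary entries), lies in $V^{\spn{t}}$. This refinement is essential, because the naive alternative of raising some $c_i \in C_i$ to the power $\phi(p^\alpha)$ to produce $\one_{T_i}$ would only place $\one_{T_i}$ in $V^{\spn{t\phi(p^\alpha)}}$, requiring a substantially stronger lower bound on $k$.
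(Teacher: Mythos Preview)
Your proposal is correct and follows essentially the same route as the paper: identify $j$ with a one-dimensional summand $C_i$, use \Cref{lem:dim Ci=1} to conclude its support is an atom, observe $\one_{T_i}\in V^{\spn{t}}$, and apply \Cref{lem:prime power}. In fact you supply more detail than the paper on the ``key step'': the paper simply asserts $\one_{A_i}\in C_i$, whereas your argument (that $C_i$ is spanned by restrictions of $\{0,1\}$-vectors, hence a nonzero one must be $\one_{T_i}$) is exactly the justification this assertion needs.
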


\begin{proof}
By definition of $S$, every $j\in [n]\setminus S$ belongs to some $A_i=\supp(C_i)$ with
$\dim C_i=1$. By \Cref{lem:dim Ci=1}, we have that $A_i$ is an atom of $\cF$.
Furthermore, $\one_{A_i}\in C_i$ and by \Cref{codes k closed} we have that
$|A_i|$ is divisible by $p$. Finally, by \Cref{lem:prime power}
we have that $|A_i|$ is divisible by $p^\alpha$.
\end{proof}

\begin{proof}[Proof of \Cref{MainThmGeneral}]
    Using essentially the same argument as for \Cref{rmk:assume full support} for the prime case, we may assume that $\cF\subset\{0,1\}^n$ is a full-support set family. 
We shall prove the statement:

{\em If $\cF$ is $4\ell^2$-wise $\ell$-divisible and if $|\cF|> 2^{\lfloor
n/\ell\rfloor-1}$, then $\cF$ contains an atom $A$ of cardinality $|A|$ divisible by $\ell$.}

The theorem follows from the statement by induction. Indeed, if $A$ is such an
atom, then $\cF'\coloneqq\cF|_{[n]\setminus A}$ is $4\ell^2$-wise
$\ell$-divisible by \Cref{lem:k wise ell divisible outside atom}. Furthermore,
we have $|\cF|\leq 2|\cF'|$ so that $|\cF'|>2^{\lfloor n'/\ell\rfloor-1}$ where
$n'=n-|A|$. Therefore $\cF'$ also satisfies the hypothesis of the statement and
we obtain inductively that all the atoms of $\cF$ have cardinality divisible by
$\ell$. It follows that $n$ is a multiple of $\ell$ and $|\cF|\leq 2^{\lfloor n/\ell\rfloor}$ since for
any family $\cF$ we have $|\cF|\leq 2^a$ whenever $a$ is the number of its
atoms. This last argument also shows that if $|\cF|> 2^{\lfloor n/\ell\rfloor-1}$,
then all atoms of $\cF$ have size exactly $\ell$.

It remains to prove the statement.
Consider the prime factorization $\ell=p_1^{\alpha_1}\cdots p_h^{\alpha_h}$,
where $p_1,\ldots,p_h$ are pairwise distinct. Let $\cF$ be $k$-wise
$\ell$-divisible, where $k=4\ell^2$. Set $t=4\ell h$, so that $k=t\ell/h$. Let $\spn{\cF}_{p_i}$ be the vector space generated by $\cF$ over $\F_{p_i}$ for every $i$.
    
    If $n\leq 2\ell$, then $k=4\ell^2\geq n$ and thus the conclusion follows from \Cref{lem:MainThmGeneral large k}. Now suppose $n>2\ell$. Then
    \begin{equation}\label{eq:n>4l}
        \left\lfloor \frac{n}{2\ell} \right\rfloor \leq \left\lfloor \frac{n}{\ell} \right\rfloor -1.
    \end{equation}
  Note that 
\begin{equation}
\label{eq:dimV>n/t}
\dim\spn{\cF}_{p_i}>\frac nt \quad \text{for all $i$,}
\end{equation}
otherwise there exists $i$ such that $\dim\spn{\cF}_{p_i}\leq n/t$ and then (\ref{eq:Odlyzko}) implies that
    \[
    2^{\lfloor n/\ell\rfloor-1}<|\mathcal{F}|\leq|\spn{F}_{p_i}\cap\{0,1\}^n|\leq 2^{\lfloor n/4\ell h\rfloor},
    \]
    which contradicts (\ref{eq:n>4l}). Let $S_i$ be defined as the set $S$ in
\Cref{lem:tphi} for the prime $p_i$, namely $S_i$ is
 the union of the supports of direct summands with dimension greater than $2$ in
the StabiliserDecomposition of $\spn{\cF}_{p_i}^{\spn{t}}$. 
Let us write $\cF_i=\cF|_{S_i}$ for $i=1\ldots h$. Let  $W_i=\spn{\cF_{i}}_{p_i}$. 
Then
    \begin{equation}\label{eq:dimW}
        \dim W_i<\frac{2n}{t}=\frac{n}{2\ell h},
    \end{equation}
    otherwise, writing $V=\langle\cF\rangle_{p_i}$, we have $\dim V^{\spn{r}}-\dim V^{\spn{r-1}}\geq\frac{n}{t}$ for all
$2\leq r\leq t$ by \Cref{lem:1/2}, which together with \eqref{eq:dimV>n/t}
contradicts $\dim V^{\spn{t}}\leq n$.

Let $U=\bigcup_{i=1}^hS_i$. The map
\begin{eqnarray*}
\cF|_{U} & \rightarrow & \cF_1\times\cF_2\times \cdots\times\cF_h\\
    F    & \mapsto     & (F\cap S_1,F\cap S_2,\ldots ,F\cap S_h)
\end{eqnarray*}
is injective, therefore, by (\ref{eq:Odlyzko}), (\ref{eq:n>4l}) and (\ref{eq:dimW}) we have that
    \[
    |\cF|_{U}|\leq \prod_{i=1}^h|\cF_i|\leq 2^{\dim W_1+\cdots+\dim W_h}\leq 2^{\left\lfloor n/2\ell\right\rfloor}\leq 2^{\lfloor n/\ell\rfloor -1}.
    \]
    But we have supposed $|\cF|>2^{\lfloor n/\ell\rfloor-1}$. Therefore, there exists $j\in [n]\backslash U$. 
It is readily checked that the inequality $\ell\geq h\phi(p_i^{\alpha_i})$
always holds, so that we have
 \[
    k=\frac{t\ell}{h}\geq t\phi(p_i^{\alpha_i})\quad\text{for every $i=1\ldots
h$.}
 \]
Applying \Cref{lem:tphi}, we therefore have that the atom $A$ of $\cF$
containing $j$ has cardinality divisible by $p_i^{\alpha_i}$ for every $i=1\ldots h$,
meaning that $|A|$ is divisible by 
$\ell$, which concludes the proof.
\end{proof}

\section{Concluding comments}
\label{sec:conclusion}

Although \Cref{MainThmGeneralizedPEventown} is optimal for $p=3$, in the sense that it supposes families $\cF$ to be $k$-wise $p$-divisible for the smallest possible value of $k$, we do not know whether \Cref{MainThmPEventownExtremal} is likewise optimal, even for $p=3$. In other words, we do not know whether there exist non-atomic $3$-wise $3$-divisible set families $\cF\subset 2^{[n]}$ that achieve the bound $2^{\lfloor n/3\rfloor}$.

We have not tried to optimise the constant in the term $4\ell^2$ in \Cref{MainThmGeneral} because it seems unlikely to us that it captures the right order of magnitude. The $O(\ell^2)$ behaviour cannot be brought down solely with the methods of this paper however, since using \Cref{lem:prime power} forces us to consider $k$-wise $\ell$-divisible families with $k=\Omega(\ell^2)$ when $\ell$ is a power of a prime. The term $k=4\ell^2$ captures therefore the worst-case behaviour of this paper's methods, but can be improved when $\ell$ has special forms. In particular, if $\ell$ is a square-free integer, then \Cref{lem:prime power} is not needed and \Cref{MainThmGeneral} becomes valid for $k$-wise $\ell$-divisible families with $k=O(\ell\omega(\ell))$, where $\omega(\ell)$ denotes the number of prime factors of $\ell$, known to behave as $\log\ell/\log\log\ell$ \cite{RobinAnalytic}.

\bigskip

\noindent
\textbf{Acknowledgment:}
The authors are grateful to Oriol Serra for bringing to their attention \cite{Gishboliner2022Small} and suggesting the line of work pursued in this paper. Chenying Lin was supported by the SFB 1085 funded by DFG and the MICINN research project PID2023-147642NB-I00.

\newpage
\bibliographystyle{alpha}
\bibliography{ref.bib}

\end{document}